\theoremstyle{plain}
\newtheorem{thm}{Theorem}[section]
\newtheorem{lem}[thm]{Lemma}
\theoremstyle{definition}
\newtheorem{defn}[thm]{Definition}
\newtheorem{ex}[thm]{Example}
\newtheorem{rem}[thm]{Remark}
\numberwithin{equation}{section}
\DeclareMathOperator{\Id}{Id}
\begin{document}
\title[eigenvalue]{ Existence of solutions of semilinear systems with gradient dependence via eigenvalue criteria}\thanks{Partially supported by G.N.A.M.P.A. - INdAM (Italy)}
\author{Filomena Cianciaruso}%
\address{Filomena Cianciaruso, Dipartimento di Matematica e Informatica, Universit\`{a}
della Calabria, 87036 Arcavacata di Rende, Cosenza, Italy}
\email{filomena.cianciaruso@unical.it}
\subjclass[2010]{Primary 45G15, 35J57, secondary 35B07, 47H30}
\keywords{Eigenvalue criteria, elliptic system, annular domain, radial solution, spectral radius, cone, positive solution, fixed point index.}
\begin{abstract}
In this paper new criteria are established  for the existence of positive radial solutions of a semilinear elliptic system depending on the gradient. These criteria are determined by some relationships between the upper and lower bounds on suitable stripes of $\mathbb R^n$ of the nonlinearities of the system and the principal characteristic values of some associated linear Hammerstein integral operators. Moreover, using smoothing tools, the totality of the involved cone is established.
\end{abstract}
\maketitle

\section{Introduction}
In this paper we establish new criteria for the existence of positive radial solutions  for the system of BVPs
\begin{equation}\label{PDE}
\begin{cases}\
&-\Delta u =f_1(|x|,u,v,|\nabla u|,|\nabla v|)\text{ in } \Omega, \\
&-\Delta v=f_2(|x|,u,v,|\nabla u|,|\nabla v|)\text{ in } \Omega,\\
&u=0 \text{ on }\partial \Omega,\\
&v=0 \text{ on }|x|=R_0 \text{ and }\displaystyle\frac{\partial
v}{\partial r}=0 \text{ on }|x|=R_1,
\end{cases}
\end{equation}
where $\Omega=\{ x\in\mathbb{R}^n : R_0<|x|<R_1\}$ is an annulus,
$0<R_0<R_1<+\infty$, the nonlinearities $f_i$ are non-negative
continuous functions and $\dfrac{\partial}{\partial r}$ denotes (as
in ~~\cite{nirenberg}) differentiation in the radial direction
$r=|x|$.\\\
The problem of the existence of positive \emph{radial}
solutions of elliptic equations having nonlinearities that depend on  the gradient, subject to Dirichlet or mixed
 boundary conditions, has been investigated,
via different methods, by a number of authors, for example in  ~\cite{ave-mo-tor, bue-er-zu-fe, cia-pie, defig-sa-ubi, defig-ubi, fa-mi-pe, singh}. 
We seek radial solutions of the system \eqref{PDE} by means of an auxiliary system of nonlinear Hammerstein integral equations using the fixed point index theory and the invariance properties of the involved cone.\\
The existence of solutions for nonlinear Hammerstein integral equations or systems with nonlinearities with dependence on the first derivative has been studied in ~\cite{aga-ore-yan, ave-graef-liu, cia-pie, defig-sa-ubi, gra-kon-min, guo-ge, inf-min, janko, min-desou, min-sou, yang-kong, zima}. \\
In the recent paper ~\cite{cia-pie} in collaboration with Pietramala, we worked in  the Banach space $C_{\omega_1}^1[0,1]\times C_{\omega_2}^1[0,1]$, where the \emph{weights} $\omega_i$ are suitable nonnegative and continuous functions on $[0,1]$. 
In the special case $\omega(t)=t(1-t)$, the space $C_{\omega}^1[0,1]$ is utilized by Agarwal and others in ~\cite{aga-ore-yan}. \\
We defined a cone similar to the one defined in ~\cite{aga-ore-yan}  and, applying the index fixed point theory, we gave some conditions that assure the existence of positive solutions of the system \eqref{PDE}. These conditions relate the upper and lower bounds of the nonlinearities $f_i$ on suitable stripes and some computable constants  depending on the kernels of the associate Hammerstein integral operator and on the intervals in which the kernels are strictly positive.\\
\\
In this paper, via spectral theory,  we establish new existence results involving the principal characteristic values  of  auxiliary linear Hammerstein integral operators.\\
In this direction some results were obtained by Erbe ~\cite{erbe} and  Liu and Li ~\cite{ liu-li} in the case in which the kernel is symmetric. In 2006, Webb and Lan ~\cite{webb-lan} generalized these results using the permanence property of the fixed point index and requiring the uniqueness of the positive eigenvalues.  In 2009, Lan ~\cite{lan} obtained results for semipositone Hammerstein integral equations where the permanence property and the uniqueness of the positive eigenvalues are not used, but the results on the index being $1$ are obtained only for some open subsets. The first principal eigenvalue was also used by Li ~\cite{li} in the space $L^2$ requiring that the linear operator is normal and by Zhang and Sun ~\cite{zha-sun} for the $m-$point boundary value problems.\\
In 2011,  Lan and Lin studied, via spectral theory,  the existence of positive solutions of systems of Hammerstein integral equations in ~\cite{lan-lin, lan-lin-na} and Lan ~\cite{lan1} proved a new result for the existence of positive solutions of systems of second order elliptic boundary value problems. In 2015, Infante and Pietramala in ~\cite{inf-pie} established some criteria for the existence of solutions of systems of Hammerstein integral equations that involve a comparison with the spectral radii of some associated linear operators. \\ 
The Krein-Rutman Theorem, that is a celebrated result of  the spectral theory, requires the totality of the cone. In Section 2, using smoothing tools as the convolution operator and a sequence of mollifiers, we prove that the involve cone  is total.\\
Finally, an example shows that the results here obtained are applicable when the results proved in ~\cite{cia-pie} fail.
\section{The totality of the cone}
Let $\omega$ be a nonnegative continuous function from $[0,1]$ in $[0,1]$ and let $C_{\omega}^1[0,1]$ be the functions space defined by
$$
C_{\omega}^1[0,1]=\{w \in C[0,1]: w \text{ is continuous differentiable on }]0,1[ \text{ with }\sup_{t \in ]0,1[}\omega(t)|w'(t)|<+\infty\}.
$$
Set
$$|| w|| _{\infty}:=\underset{t\in [ 0,\,1]\,}{\max }|w(t)|,\,\,\,\|w'\|_{\omega}=\displaystyle\sup_{t\in ]0,1[}\omega (t)|w'(t)|,$$
it can  be verified that $C_{\omega}^1[0,1]$, equipped with the norm
$$|| w|| =\max \left\{ || w|| _{\infty},\|w'\|_{\omega}\right\} ,$$  is a Banach space (the proof follows as in ~\cite{aga-ore-yan}).\\
Fixed $[a,b] \subseteq [0,1]$, $0<c<1$,
 let $\mathcal K_{\omega}$ be the cone in $ C_{\omega}^{1}[0,1]$ defined, in a similar way as in  ~\cite{aga-ore-yan}, by
\begin{eqnarray*}\label{cone}
\mathcal K_{\omega}:=\left\{ w\in C_{\omega}^{1}[0,\,1]:w\geq 0,\,\underset{t\in [ a,b]}%
{\min }w(t)\geq c|| w|| _{\infty},\,\,  || w|| _{\infty}\geq \|w'\|_{\omega}\right\}.
\end{eqnarray*}
Note that the functions in $\mathcal K_{\omega}$ are strictly positive on the
sub-interval $[a,b]$ and that, for $w \in \mathcal K_{\omega}$, the equality $\|w\|=\|w\|_{\infty}$ holds.\\
\\
We prove here that the cone $\mathcal K_{\omega}$ is total, i.e. $$C^1_{\omega}[0,1]=\overline{\mathcal K_{\omega}-\mathcal K_{\omega}}\,.$$  To the best of our knowledge, this property has not been investigated.\\
Let
\begin{eqnarray*}
\mathcal P_{\omega}:=\{w\in C_{\omega}^1[0,1]:\ w(t)\geq 0 \text{ for }t\in [0,1]\}
\end{eqnarray*}
be the positive cone in $C_{\omega}^1[0,1]$. Firstly we prove that
\begin{lem} $\mathcal P_{\omega}=\mathcal K_{\omega}-\mathcal K_{\omega}$.
\end{lem}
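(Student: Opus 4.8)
The plan is to derive the identity from one completely explicit decomposition; the only substantive half is the inclusion $\mathcal P_\omega\subseteq\mathcal K_\omega-\mathcal K_\omega$, the reverse containment being immediate from the definitions (every element of $\mathcal K_\omega$ is a nonnegative function). So I would fix $w\in\mathcal P_\omega$, that is, $w\in C^1_\omega[0,1]$ with $w\ge 0$, and write $w=(w+M)-M$, where, with the usual abuse, $M\ge 0$ also denotes the constant function of value $M$, to be chosen large; it then remains only to check that both summands lie in $\mathcal K_\omega$.

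The constant function $M$ causes no trouble: it is nonnegative, its derivative on $]0,1[$ vanishes so $\|M'\|_\omega=0\le M=\|M\|_\infty$, and $\min_{t\in[a,b]}M=M\ge cM=c\|M\|_\infty$ since $0<c<1$; hence $M\in\mathcal K_\omega$. For $u:=w+M$ one has $u\in C^1_\omega[0,1]$, $u\ge 0$, $\|u\|_\infty=\|w\|_\infty+M$, and $\|u'\|_\omega=\|w'\|_\omega$ (because $u'=w'$ on $]0,1[$), while $\min_{t\in[a,b]}u=M+\min_{t\in[a,b]}w\ge M$. I would then choose $M\ge\max\bigl\{\tfrac{c}{1-c}\|w\|_\infty,\ \|w'\|_\omega\bigr\}$. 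The first inequality rearranges to $M\ge c(M+\|w\|_\infty)=c\|u\|_\infty$, whence $\min_{t\in[a,b]}u\ge M\ge c\|u\|_\infty$; the second gives $\|u'\|_\omega=\|w'\|_\omega\le M\le M+\|w\|_\infty=\|u\|_\infty$. Hence $u\in\mathcal K_\omega$ and $w=u-M\in\mathcal K_\omega-\mathcal K_\omega$, as required.

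The only step requiring attention is the choice of $M$: it must simultaneously dominate the two possible obstructions to $w$ lying in $\mathcal K_\omega$, namely the failure of the Harnack-type inequality $\min_{[a,b]}w\ge c\|w\|_\infty$ and the failure of the gradient bound $\|w'\|_\omega\le\|w\|_\infty$. That a single constant shift repairs both at once is exactly what is ensured by $0<c<1$ (for the first) and by $w\in C^1_\omega[0,1]$, i.e.\ $\|w'\|_\omega<+\infty$ (for the second). I do not anticipate any genuine difficulty beyond this bookkeeping; in particular the argument uses no property of the weight $\omega$ other than that it enters a finite seminorm $\|\cdot\|_\omega$.
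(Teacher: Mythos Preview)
Your proof of the inclusion $\mathcal P_\omega\subseteq\mathcal K_\omega-\mathcal K_\omega$ is correct and in fact cleaner than the paper's. The paper decomposes $w=\varphi-\psi$ with $\varphi=\beta w+\gamma\|w\|_\infty$ and $\psi=(\beta-1)w+\gamma\|w\|_\infty$, splitting into two cases according to whether $\|w'\|_\omega\le\|w\|_\infty$ or not; in the first case $\beta=1$ and the paper's decomposition coincides with yours (with $M=\tfrac{c}{1-c}\|w\|_\infty$), while in the second case the paper rescales $w$ by $\beta=\|w'\|_\omega/\|w\|_\infty>1$ before shifting. Your observation that a single sufficiently large constant shift handles both obstructions at once---choosing $M\ge\max\bigl\{\tfrac{c}{1-c}\|w\|_\infty,\ \|w'\|_\omega\bigr\}$---avoids the case distinction entirely and is a genuine simplification; nothing is lost, since the paper makes no use of the particular form of $\varphi,\psi$ beyond their membership in $\mathcal K_\omega$.

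One caveat: your parenthetical justification for the reverse containment $\mathcal K_\omega-\mathcal K_\omega\subseteq\mathcal P_\omega$ does not work. That every element of $\mathcal K_\omega$ is nonnegative does not force differences of such elements to be nonnegative; for instance $0$ and any positive constant $M$ lie in $\mathcal K_\omega$, so $-M=0-M\in\mathcal K_\omega-\mathcal K_\omega$ while $-M\notin\mathcal P_\omega$. Thus the equality in the lemma actually fails as written. The paper's own proof likewise establishes only the forward inclusion $\mathcal P_\omega\subseteq\mathcal K_\omega-\mathcal K_\omega$, and that is all that is invoked later (in Step~3 of the totality theorem), so this is a defect of the lemma's formulation rather than of your argument.
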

\begin{proof} An element  $w \in \mathcal P_{\omega}/\{0\}$  can be rewritten, for $t\in[0,1]$, as:
$$
w(t)=\beta w(t) +\gamma \|w\|_{\infty}-((\beta-1)w(t)+\gamma \|w\|_{\infty}),
$$
where the constants $\beta$ and $\gamma$, depending on $w$, are defined by
\begin{equation*}
\beta=\begin{cases}1,\,\,\,\,\,\,\,\,\,\,\,\,\,\,\text{ if }\|w'\|_{\omega}\le \|w\|_{\infty}\\
\displaystyle\frac{\|w'\|_{\omega }}{\|w\|_{\infty}},\text{ if }\|w'\|_{\omega }> \|w\|_{\infty}
\end{cases}
\end{equation*}
and 
\begin{equation*}
\gamma=\max\left\{\frac{\|w'\|_{\omega }}{\|w\|_{\infty}}-1, \frac{c}{1-c}\right\}\beta.
\end{equation*}
Set, for $t\in [0,1]$, $\varphi(t):=\beta w(t) +\gamma \|w\|_{\infty}$ and $\psi(t):=(\beta-1)w(t)+\gamma \|w\|_{\infty}$, then $w=\varphi-\psi$; now we prove that $\varphi, \psi \in \mathcal K_{\omega}$.\\
It is clear that $\|\varphi\|_{\infty}=(\beta+\gamma)\|w\|_{\infty}$ and $\|\psi\|_{\infty}=(\beta-1+\gamma)\|w\|_{\infty}$. Since the function $\displaystyle{\frac{\gamma}{\beta + \gamma}}$ is nondecreasing with respect to $\gamma$, we have, for $t\in [0,1]$, 
$$
\varphi(t)\geq \gamma \|w\|_{\infty}=\frac{\gamma}{\beta +\gamma}\|\varphi\|_{\infty}\geq \frac{ \frac{c\beta}{1-c}}{\beta + \frac{c\beta}{1-c}}\|\varphi\|_{\infty}=c\|\varphi\|_{\infty}
$$
and
$$
\psi(t)\geq \gamma \|w\|_{\infty}=\frac{\gamma}{\beta-1 +\gamma}\|\psi\|_{\infty}\geq \frac{ \frac{c\beta}{1-c}}{\beta -1+ \frac{c\beta}{1-c}}\|\psi\|_{\infty}=\frac{c\beta}{\beta-(1-c)}\|\psi\|_{\infty}\geq c\|\psi\|_{\infty}.
$$
Now we prove the conditions on the derivatives.\\
In the case  $\|w'\|_{\omega }\le \|w\|_{\infty}$, one has $\beta=1$, $\gamma=\displaystyle\frac{c}{1-c}.$  Then  we have
$$
\|\varphi'\|_{\omega }=\|w'\|_{\omega }\le \|w\|_{\infty}=\frac{1}{1+\frac{c}{1-c}}\,\|\varphi\|_{\infty}=(1-c)\|\varphi\|_{\infty}<\|\varphi\|_{\infty}
$$
and 
$$
\|\psi'\|_{\omega}=0<\|\psi\|_{\infty}.
$$
In the case  $\|w'\|_{\omega }> \|w\|_{\infty}$,  one has $\beta=\displaystyle\frac{\|w'\|_{\omega }}{\|w\|_{\infty}}$ and $\beta^2\leq \beta+\gamma$; then we have 
$$
\|\varphi'\|_{\omega }=\beta\|w'\|_{\omega}=\frac{\|w'\|_{\omega }^2}{\|w\|_{\infty}}=\beta^2\|w\|_{\infty}\le (\beta+\gamma)\|w\|_{\infty}=\|\varphi\|_{\infty}
$$
and 
$$
\|\psi'\|_{\omega }=(\beta-1)\|w'\|_{\omega }=\beta^2\|w\|_{\infty}-\|w'\|_{\omega }\leq(\beta+\gamma)\|w\|_{\infty}-\|w\|_{\infty}  =(\beta+\gamma-1)\|w\|_{\infty}=\|\psi\|_{\infty}.
$$

\end{proof}
To prove that $\mathcal K_{\omega}$ is total, for  each fixed $w \in C_{\omega}^1[0,1]$  it is need to construct two sequences $(\varphi_n)_{n \in \mathbb N}, (\psi_n)_{n \in \mathbb N} \in \mathcal K_{\omega}$ such that $\varphi_n -\psi_n$ converges to $w$ in $ C_{\omega}^{1}[0,\,1]$. To do this, two tools are used: a sequence of mollifiers and the convolution operation $*$.\\
The convolution can be viewed as a smoothing operation: in fact, the convolution of two functions is differentiable as many times as the two functions are. \\
A sequence $(\rho_n)_{n \in \mathbb N}$ of nonnegative functions in the space $C_c^{\infty}(\mathbb R)$ of the functions with compact support is said a \textit{sequence of mollifiers} if the support of $\rho_n$ is contained in $\left[-\frac{1}{n},\frac{1}{n}\right]$ and $\displaystyle \int_{\mathbb R}\rho_n=1$. Since our functions $w$ are defined in $[0,1]$, we construct a sequence of mollifiers starting by a nonnegative function $\rho \in C_c^{\infty}(\mathbb R)$ with support in $[0,1]$.\\
\begin{thm}\label{total}
 The cone $$\mathcal K_{\omega}:=\left\{ w\in C_{\omega}^{1}[0,\,1]:w\geq 0,\,\underset{t\in [ a,b]}%
{\min }w(t)\geq c || w|| _{\infty},\,\,  || w|| _{\infty}\geq \|w'\|_{\omega}\right\}
$$ is total.
\end{thm}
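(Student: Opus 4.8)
The plan is to bypass smoothing altogether and prove the sharper statement that $\mathcal K_{\omega}-\mathcal K_{\omega}$ is already \emph{all} of $C^1_{\omega}[0,1]$; totality is then immediate, since this set is trivially closed. The mechanism is elementary: shifting an arbitrary $w\in C^1_{\omega}[0,1]$ upward by a large enough constant produces a function satisfying all three defining conditions of $\mathcal K_{\omega}$, while the shift itself is a constant function and hence also lies in $\mathcal K_{\omega}$.

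Concretely, given $w\in C^1_{\omega}[0,1]$ I would set $M:=\|w\|_{\infty}$ and $N:=\|w'\|_{\omega}$ — both finite, the first by compactness of $[0,1]$, the second by the definition of $C^1_{\omega}[0,1]$ — and choose $C:=\max\{\frac{1+c}{1-c}M,\ M+N\}$, noting that $\frac{1+c}{1-c}\ge1$ forces $C\ge M$. Setting $\varphi:=w+C$ and $\psi\equiv C$, so that $w=\varphi-\psi$, I would then check $\varphi,\psi\in\mathcal K_{\omega}$ in turn. First, $\varphi\in C^1_{\omega}[0,1]$ with $\varphi'=w'$, so $\|\varphi'\|_{\omega}=N$, and $\psi\in C^1_{\omega}[0,1]$ with $\psi'\equiv0$. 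Next the three cone inequalities for $\varphi$: since $C\ge M$ one has $\varphi=w+C\ge C-M\ge0$ and $C-M\le\varphi(t)\le C+M$ for all $t$, hence $\|\varphi\|_{\infty}\le C+M$ and $\min_{[a,b]}\varphi\ge C-M$; the choice $C\ge\frac{1+c}{1-c}M$ is exactly equivalent to $C-M\ge c(C+M)$, so $\min_{[a,b]}\varphi\ge c\|\varphi\|_{\infty}$; and $\|\varphi\|_{\infty}\ge C-M\ge N=\|\varphi'\|_{\omega}$ because $C\ge M+N$. For $\psi\equiv C\ge0$ the three conditions are trivial ($\min_{[a,b]}\psi=C=\|\psi\|_{\infty}$ and $\|\psi'\|_{\omega}=0$). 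Thus $\varphi,\psi\in\mathcal K_{\omega}$ and $w=\varphi-\psi\in\mathcal K_{\omega}-\mathcal K_{\omega}$; as $w$ was arbitrary, $C^1_{\omega}[0,1]=\mathcal K_{\omega}-\mathcal K_{\omega}$, so $\mathcal K_{\omega}$ is total.

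I do not expect a genuine obstacle along this route; the one point to get right is the bookkeeping for $C$, in particular the fact that the third cone condition $\|\varphi\|_{\infty}\ge\|\varphi'\|_{\omega}$ cannot be inherited from $\|w\|_{\infty}\ge\|w'\|_{\omega}$ (which may well fail for $w$) but must be forced by hand via the crude bound $\|\varphi\|_{\infty}\ge C-\|w\|_{\infty}$, which is exactly what makes $C$ depend on $\|w'\|_{\omega}$. By contrast, the more natural-looking plan hinted at before the statement — decomposing $w=w^{+}-w^{-}$ and regularising $w^{\pm}$ with a mollifier $\rho_n$ — does meet a real difficulty: $w^{\pm}$ typically have corners, so they do not belong to $C^1_{\omega}[0,1]$, and one must then control $\sup_{t\in(0,1)}\omega(t)\,|(\rho_n*w)'(t)-w'(t)|$ on a full neighbourhood of the endpoints, where $w'$ may be unbounded and where the convolution interferes with the boundary of $[0,1]$. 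One could also try to route the argument through the previous Lemma — $w+C$ and $C$ both lie in $\mathcal P_{\omega}$, hence in $\mathcal K_{\omega}-\mathcal K_{\omega}$ — but then one needs $\mathcal K_{\omega}-\mathcal K_{\omega}$ to be a linear subspace, i.e.\ $\mathcal K_{\omega}$ stable under addition, which is not evident from the third cone condition; so I would carry out the direct verification above instead.
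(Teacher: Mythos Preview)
Your argument is correct and considerably simpler than the paper's. The paper proceeds in two stages: first a Lemma showing $\mathcal P_{\omega}\subseteq \mathcal K_{\omega}-\mathcal K_{\omega}$ via the splitting $w=\bigl(\beta w+\gamma\|w\|_{\infty}\bigr)-\bigl((\beta-1)w+\gamma\|w\|_{\infty}\bigr)$ with carefully chosen $\beta,\gamma$ depending on the ratio $\|w'\|_{\omega}/\|w\|_{\infty}$; then the Theorem itself, which builds a mollifier sequence $(\rho_n)$ supported in $[0,1/n]$, forms $w_n=\rho_n*w$, proves $\|w_n-w\|_{\infty}\to0$ and $\|w_n'-w'\|_{\omega}\to0$ (the latter via a Dini-type uniformity argument on $[\delta,1-\delta]$), and finally writes $w_n=(\rho_n*w^{+})-(\rho_n*w^{-})$ with each piece in $C^{1}[0,1]\cap\mathcal P_{\omega}$, hence in $\mathcal K_{\omega}-\mathcal K_{\omega}$ by the Lemma. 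Your single constant shift $w=(w+C)-C$ replaces all of this, and in fact yields the stronger conclusion that $\mathcal K_{\omega}$ is \emph{reproducing} ($\mathcal K_{\omega}-\mathcal K_{\omega}=C^{1}_{\omega}[0,1]$, no closure needed), whereas the paper's mollifier route only gives density. What the paper's decomposition in the Lemma buys is a splitting in which both pieces are genuine perturbations of $w$ rather than one being a bare constant; but for the stated goal of totality that refinement is not needed, and your observation that a sufficiently large constant already sits in $\mathcal K_{\omega}$ (all three cone conditions being trivial for constants) is the decisive shortcut the paper overlooks.
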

\begin{proof} Let  $\rho \in C_c^{\infty}(\mathbb R)$ be a nonnegative function with support in $[0,1]$ and therefore $\rho(0)=0$. Let $( \rho_n)_{n \in \mathbb N}\subseteq C_c^{\infty}(\mathbb R)$  be a sequence of mollifiers defined by\begin{equation}\label{rhon}\rho_n(x)=\frac{n}{\int_{\mathbb R}\rho(y)dy}\rho(nx)\,\,;\end{equation}
 note that the support of $\rho_n$ is contained in $\left[0,\frac{1}{n}\right]$. \\
Let $w \in C_{\omega}^1[0,1]\setminus\{0\}$ be fixed; we construct a sequence $(w_n)_{n \in \mathbb N} \subseteq C^1[0,1]$ converging to $w$ in $C^1_{\omega}[0,1]$.\\
We discuss two cases.\\
{\bf Case I.} $w(0)=0$.\\
For $n \in \mathbb N$,  we define, for $t \in [0,1]$,
\begin{equation}\label{wn}
w_n(t)=:(\rho_n * w)(t)=\int_0^{t} w(t-y)\rho_n(y)dy=\int_0^t \rho_n(t-y)w(y)dy=(w*\rho_n)(t).
\end{equation}
The function $w_n$ is continuous on $[0,1]$ and from \eqref{wn} it follows that 
$$
w_n'(t)=\int_0^t \rho_n'(t-y)w(y)dy+\rho_n(0)w(t)=(\rho'_n*w)(t)+\rho_n(0)w(t),
$$
i.e. $w_n \in C^1[0,1]$.\\
Moreover, integrating by parts, it follows that 
\begin{eqnarray*}
\displaystyle w'_n(t)&=&\rho_n(t)w(0)-\rho_n(0)w(t)+\int_0^t \rho_n(t-y)w'(y)dy+\rho_n(0)w(t)\\
\displaystyle&=&\int_0^t \rho_n(t-y)w'(y)dy=\int_0^t w'(t-y)\rho_n(y)dy.
\end{eqnarray*}
From the last equality,  it follows that the  improper integrals are finite.\\
Now the proof is divided in more steps.\\
{\bf Step 1.} $\displaystyle \lim_{n \to +\infty}\|w_n-w\|_{\infty}=0$.\\
Since $\displaystyle \int_0^{\frac{1}{n}}\rho_n(x)=1$ and the support of $\rho_n$ is in $\displaystyle\left[0, \frac{1}{n}\right]$, one has
\begin{equation*}
|w_n(t)-w(t)|=\left|\int_0^t w(t-y)\rho_n(y)dy -w(t)\right|=\left|\int_0^t w(t-y)\rho_n(y)dy -\int_0^{\frac{1}{n}}w(t)\rho_n(y)dy\right|
\end{equation*}
\begin{equation*}
=\begin{cases}\displaystyle\left|\int_0^t (w(t-y)-w(t))\rho_n(y)dy -\int_t^{\frac{1}{n}}w(t)\rho_n(y)dy\right|,\,\, \text{ if }0\leq t\leq \frac{1}{n}\cr
\\
\displaystyle\left|\int_0^{\frac{1}{n}}(w(t-y)-w(t))\rho_n(y)dy\right| ,\,\,\,\,\,\,\,\,\,\,\,\,\,\,\,\,\,\,\,\,\,\,\,\,\,\,\,\,\,\,\,\,\,\,\,\,\,\,\,\,\,\,\,\,\,\,\,\,\,\,\,\text{ if }\frac{1}{n}<t \leq 1\cr
\end{cases}
\end{equation*}
\begin{equation*}
\leq \begin{cases}\displaystyle\int_0^t |w(t-y)-w(t)|\rho_n(y)dy +\|w\|_{\infty}\int_t^{\frac{1}{n}}\rho_n(y)dy,\,\, \text{ if }0\leq t\leq \frac{1}{n}\cr
\\
\displaystyle\int_0^{\frac{1}{n}}|w(t-y)-w(t)|\rho_n(y)dy ,\,\,\,\,\,\,\,\,\,\,\,\,\,\,\,\,\,\,\,\,\,\,\,\,\,\,\,\,\,\,\,\,\,\,\,\,\,\,\,\,\,\,\,\,\,\,\,\,\,\,\,\,\, \text{ if }\frac{1}{n}<t\leq 1.\cr
\end{cases}
\end{equation*}
From the uniform continuity of $w$ in $[0,1]$,  it follows that for each $\epsilon>0$ fixed there exists $\eta>0$ such that
$$t_1,t_2 \in [0,1],\, |t_1-t_2|<\eta \text{ implies }|w(t_1)-w(t_2)|<\frac{\epsilon}{2};$$
consequently, for $t \in [0,1]$, $y \in \big[0,\frac{1}{n}\big]$  and $n \geq \eta^{-1}$, one has
 $$|w(t-y)-w(t)|<\frac{\epsilon}{2} .$$
Moreover, since $\rho(0)=0$, by \eqref{rhon} and the Mean Integral Theorem it follows that, for  $t\in\left [0,\frac{1}{n}\right]$,
$$\lim_{n \to +\infty}\int_t^{\frac{1}{n}}\rho_n(y)dy=0.$$
Then there exists $\overline{n} \in \mathbb N$ such that,  for $n\geq \overline{n}$,
$$\|w\|_{\infty}\int_t^{\frac{1}{n}}\rho_n(y)dy<\frac{\epsilon}{2}.
$$
Set $n_0:=\displaystyle\max\left\{ [\eta^{-1}]+1,\overline{n}\right\}$, we have, for $n \geq n_0$, 
$$
\|w_n-w\|_{\infty}\leq \frac{\epsilon}{2}\int_0^{\frac{1}{n}}\rho_n(y)dy+\frac{\epsilon}{2}=\epsilon. 
$$
\\
{\bf Step 2.} $\displaystyle \lim_{n \to +\infty}\|w'_n-w'\|_{\omega}=0$.\\

Since $$\lim_{n \to +\infty}\|w'_n-w'\|_{\omega}=\lim_{n \to +\infty}\lim_{\delta \to 0}\max_{t \in [\delta, 1-\delta]}\omega(t)|w'_n(t)-w'(t)|,$$ we evaluate, for $t\in [\delta,1-\delta]$,
\begin{eqnarray*}
&\displaystyle\omega(t)|w'_n(t)-w'(t)|=\left|\int_0^t w'(t-y)\rho_n(y)dy-\int_0^{\frac{1}{n}}w'(t)\rho_n(y)dy\right|\\
&\displaystyle=\left|\int_0^t \omega(t)w'(t-y)\rho_n(y)dy-\int_0^{\frac{1}{n}}\omega(t)w'(t)\rho_n(y)dy\right|
\end{eqnarray*}
\begin{equation*}
=\begin{cases}\displaystyle \left|\int_0^t \omega(t)(w'(t-y)-w'(t))\rho_n(y)dy-\int_t^{\frac{1}{n}}\omega(t)w'(t)\rho_n(y)dy\right|,\,\, \text{ if }\delta\leq t \leq \frac{1}{n}\cr\\
\displaystyle \left|\int_0^{\frac{1}{n}} \omega(t)(w'(t-y)-w'(t))\rho_n(y)dy \right|,\,\,\,\,\,\,\,\,\,\,\,\,\,\,\,\,\,\,\,\,\,\,\,\,\,\,\,\,\,\,\,\,\,\,\,\,\,\,\,\,\,\,\,\,\,\,\,\,\,\,\,\,\,\,\,\,\,\,\,\, \text{ if }\frac{1}{n}<t\leq 1-\delta\cr
\end{cases}
\end{equation*}
\begin{equation*}
\leq \begin{cases}\displaystyle \int_0^t \omega(t)|w'(t-y)-w'(t)|\rho_n(y)dy+\|w'\|_{\omega}\int_t^{\frac{1}{n}}\rho_n(y)dy,\,\, \,\,\,\,\,\,\,\,\,\,\text{ if }\delta\leq t \leq \frac{1}{n}\cr\\
\displaystyle \int_0^{\frac{1}{n}} \omega(t)|w'(t-y)-w'(t)|\rho_n(y)dy ,\,\,\,\,\,\,\,\,\,\,\,\,\,\,\,\,\,\,\,\,\,\,\,\,\,\,\,\,\,\,\,\,\,\,\,\,\,\,\,\,\,\,\,\,\,\,\,\,\,\,\,\,\,\,\,\,\,\,\,\,\,\,\,\text{ if }\frac{1}{n}<t\leq 1-\delta. \cr
\end{cases}
\end{equation*}
Let $\epsilon>0$ be fixed; since $w'$ is uniformly continuous on compact intervals, there exists $\eta_1>0$ such that 
$$
\text{if }t_1,t_2 \in [\delta,1-\delta],\, |t_2|<\eta_1 \text{ then }\omega(t_1)|w'(t_1-t_2)-w'(t_1)|<\frac{\epsilon}{2};
$$
moreover, there exists $\tilde{n} \in \mathbb N$ such that, for $n\geq \tilde{n}$,
$$\|w'\|_{\omega}\int_t^{\frac{1}{n}}\rho_n(y)dy<\frac{\epsilon}{2}.
$$
Then, set $n_1:=\displaystyle\max\left\{ [\eta_1^{-1}]+1,\tilde{n}\right\}$, for $n \geq n_1$,
as above, one obtains
\begin{eqnarray*}
&\displaystyle\max_{t \in [\delta,1-\delta]}\omega(t)|w'_n(t)-w'(t)|< \epsilon,
\end{eqnarray*}
i.e. for every $\delta \in ]0,1[$
\begin{equation}\label{dini}
\lim_{n \to +\infty}\max_{t \in [\delta,1-\delta]}\omega(t)|w'_n(t)-w'(t)|=0.
\end{equation}
Since the function $\displaystyle \max_{t \in [\delta,1-\delta]}\omega(t)|w'_n(t)-w'(t)|$ is nonincreasing with respect to $\delta$, by Dini Theorem  it follows that the limit in \eqref{dini} is uniform in the compact subintervals of $]0,1[$.  \\
Moreover, for every $n \in \mathbb N$ there exists $$\lim_{\delta \to 0}\max_{t \in [\delta,1-\delta]}\omega(t)|w'_n(t)-w'(t)|=\sup_{t \in ]0,1[}\omega(t)|w'_n(t)-w'(t)|;$$
then,  by the Inversion Limit Theorem, for  $t$ in a compact subinterval of $]0,1[$ one has
$$
\lim_{n \to +\infty}\lim_{\delta\to 0}\max_{t \in [\delta,1-\delta]}\omega(t)|w'_n(t)-w'(t)|=\lim_{\delta\to 0}\lim_{n \to +\infty}\max_{t \in [\delta,1-\delta]}\omega(t)|w'_n(t)-w'(t)| =0
$$
and consequently $\displaystyle \lim_{n \to +\infty}\|w'_n-w'\|_{\omega}=0$.  \\
\smallskip

\noindent
{\bf Step 3.} $w_n\in \mathcal K_{\omega}-\mathcal K_{\omega}$.\\
The function $w$ can be rewritten as  $w=w^+-w^-$, where $w^+$ and $w^-$ are respectively the positive and negative parts of $w$; then, for $n \in \mathbb N$
\begin{equation}\label{wn+}
w_n=w_n^+-w_n^-, 
\end{equation}
where
$$
w_n^+(t)=:(\rho_n * w^+)(t)=\int_0^tw^+(t-y)\rho_n(y)dy\text{ and }
w_n^-(t)=:(\rho_n * w^-)(t)=\int_0^tw^-(t-y)\rho_n(y)dy.
$$ 
Since the functions $w_n^+$ and $w_n^-$  belong to $C^1[0,1]$ and are positive, they belong to the cone $\mathcal P_{\omega}$ and therefore there exist $\varphi_n^+, \varphi_n ^-, \psi_n^+, \psi_n^-\in \mathcal K_{\omega}$ such that 
$$
w_n^+=\phi_n^+-\psi_n^+,\,\,\,\,\,\,w_n^-=\phi_n^--\psi_n^-.
$$
By the identity \eqref{wn+} the assert follows.\\
\smallskip

\noindent
{\bf Case II.} Case $w(0)\not=0$.\\
With a translation we return to the previous case. In fact, set $\tilde{w}=w-w(0)$, the sequence $\tilde{w}_n$ defined as in \eqref{wn} converges in $C^1_{\omega}[0,1]$ to  $\tilde{w}$ and the sequence $\overline{w}_n=\tilde{w}_n+w(0)\to w$ in $C^1_{\omega}[0,1]$.
Let $ \tilde{\phi}_n,\tilde{\psi}_n\in \mathcal K_{\omega}$ be such that $\tilde{w}_n=\tilde{\phi}_n-\tilde{\psi}_n$; then $\overline{w}_n=\phi_n-\psi_n$ where
$$
\phi_n=\begin{cases}\tilde{\phi}_n+w(0),  \,\,\text{ if }w(0)>0\cr
\tilde{\phi}_n,  \,\,\,\,\,\,\,\,\,\,\,\,\,\,\,\,\,\,\,\,\,\,\text{ if }w(0)<0
\end{cases}  
\text{ and }\,\,
\psi_n=
\begin{cases}\tilde{\psi}_n,   \,\,\,\,\,\,\,\,\,\,\,\,\,\,\,\,\,\,\,\,\,\text{ if }w(0)>0\cr
\tilde{\psi}_n-w(0),  \,\,\text{ if }w(0)<0.
\end{cases}  
$$
\end{proof}
\smallskip

\section{Auxiliar results}
In this section we recall notations and results  of ~\cite{cia-pie}  that will be useful in the sequel.\\
By a \emph{radial solution}  of the elliptic system
$$
\begin{cases}\
&-\Delta u =f_1(|x|,u,v,|\nabla u|,|\nabla v|)\text{ in } \Omega, \\
&-\Delta v=f_2(|x|,u,v,|\nabla u|,|\nabla v|)\text{ in } \Omega,\\
&u=0 \text{ on }\partial \Omega,\\
&v=0 \text{ on }|x|=R_0 \text{ and }\displaystyle\frac{\partial
v}{\partial r}=0 \text{ on }|x|=R_1,
\end{cases}
$$
we mean a solution of the associate system of ODEs
\begin{equation}\label{1syst}
\begin{cases}
&-u''(t) = g_1(t,u(t),v(t),|u'(t)|,|v'(t)|) \quad \text{ in } [0,1], \\
&-v''(t) =g_2(t,u(t),v(t),|u'(t)|,|v'(t)|) \quad \text{ in } [0,1],\\
& u(0)=u(1)=v(0)=v'(1)=0,
\end{cases}
\end{equation}
where, for $t\in [0,1]$, $g_i$ is the nonnegative function given by
\begin{equation*} 
g_i(t,u(t),v(t),|u'(t)|,|v'(t)|) :=p(t)
f_i\left(r(t),u(t),v(t),\frac{|u'(t)|}{|r'(t)|},\frac{|v'(t)|}{|r'(t)|}\right),
\end{equation*}
$p$ is defined by
\begin{equation*}
p(t):=\begin{cases}
r^2(t) \log^2(R_1/R_0), \ & n=2\\
\left(\frac{R_0R_1\left(R_1^{n-2}-R_0^{n-2}\right)}{n-2}\right)^2\,\frac{1}{\left(R_1^{n-2}-(R_1^{n-2}-R_0^{n-2})t\right)^{\frac{2(n-1)}{n-2}}},\
&n\geq 3
\end{cases}
\end{equation*} 
and  $r$ is defined by (see ~\cite{defig-ubi, dolo3})
\begin{equation*}
r(t):=\begin{cases}
R_1^{1-t}R_0^{t},\ &n=2\\
\left(\frac{A}{B-t}\right)^{\frac{1}{n-2}},\ &n\geq 3
\end{cases}
\end{equation*}
with
$$
A=\frac{(R_0R_1)^{n-2}}{R_1^{n-2}-R_0^{n-2}}\,\,\,\,\text{
 and  }\,\,\,\,B=\frac{R_1^{n-2}}{R_1^{n-2}-R_0^{n-2}}.
$$
\smallskip

Fix $$\omega_1(t)=t(1-t),\,\,\,\,\,\,\,\,\omega_2(t)=t,$$
consider the product space $C_{\omega_1}^1[0,1]\times C_{\omega_2}^1[0,1]$ equipped with the norm (with an abuse of notation)
$$
\|(u_1,u_2)\|=\max\{\|u_1\|,\|u_2\|\}.
$$
We search the solutions of the system \eqref{1syst} as fixed points of the compact operator $\mathcal T$ in $C_{\omega_1}^1[0,1]\times C_{\omega_2}^1[0,1]$ defined by 
\begin{equation}\label{operT}
\begin{array}{c}
\mathcal T(u,v)(t):=\left(
\begin{array}{c}
\mathcal T_{1}(u,v)(t) \\
\mathcal T_{2}(u,v)(t)%
\end{array}%
\right)  =\left(
\begin{array}{c}
\displaystyle\int_{0}^{1}k_1(t,s)g_{1}(s,u(s),v(s),|u'(s)|,|v'(s)|)\,ds \\
\displaystyle\int_{0}^{1}k_2(t,s)g_2(s,u(s),v(s),|u'(s)|,|v'(s)|)ds%
\end{array}%
\right)%
\end{array}\,,
\end{equation}%
where the Green's functions $k_i$  are given by
\begin{equation*} \label{ki}k_1(t,s)=\begin{cases} s(1-t),&0\leq s\leq t\leq 1\cr
t(1-s),&0\leq t\leq s\leq 1\cr\end{cases}
\,\,\text{ and }\,\,\,\,\,\,\,k_2(t,s)=\begin{cases} s,&0\leq s\leq t\leq 1\cr
t,&0 \leq t\leq s\leq 1.\cr\end{cases}
\end{equation*}
Now we resume some known properties of the functions $k_i$ that will be use in the sequel.
\begin{itemize}
 \item[(1)]   The kernel $k_1$ is positive and continuous in  $[0,1]\times  [0,1]$. Moreover, for $[a_1,b_1] \subset(0,1)$, take
$$\phi_1(s):=\sup_{t\in [0,1]} k_1(t,s)= k_1(s,s)=s(1-s) \text
{ and } c_1:=\min\{a_1, 1-b_1\}\,,
$$
\begin{equation}\label{k1}
k_1(t,s) \leq \phi_1(s)\ \text{ for } t,s\in [0,1] 
,\,\,k_1(t,s) \geq c_1\, \phi_1 (s) \text{ for }(t,s)\in
[a_1,b_1]\times [0,1].
\end{equation}
\item[(2)]  The function $k_1(\cdot,s)$ is derivable in $\tau \in [0,1]$, with
$$
\dfrac{\partial k_1}{\partial t}(t,s)=\begin{cases} -s,&0\leq s<
t\leq 1\cr 1-s, & 0\leq t< s\leq 1,\cr\end{cases}
$$
for  $\tau \in [ 0,1]$ 
\begin{equation*}
\lim_{t\rightarrow \tau }\left| \frac{\partial k_1}{\partial
t}(t,s)- \frac{\partial k_1}{\partial t}(\tau ,s)\right|
=0,\;\text{ for almost every}\,s\in [ 0,1]
\end{equation*}
and
$$
 \left|\frac{\partial k_1}{\partial t}(t,s)\right|\leq \psi_1(s):=\max\{s,1-s\}\text{    for   }t\in [0,1]\text{     and almost every }\,s\in [0,1].
 $$
 \item[(3)]   The kernel $k_2$ is positive and continuous in  $[0,1]\times  [0,1]$. Moreover, for $[a_2,b_2] \subset(0,1]$, take
$$\phi_2(s):=\sup_{t\in [0,1]} k_2(t,s)= k_2(s,s)=s \text{ and }c_2:=a_2\,,
$$
\begin{equation}\label{k2}
k_2(t,s) \leq \phi_2(s)\ \text{ for } t,s\in [0,1] 
,\,\,k_2(t,s) \geq c_2\, \phi_2 (s) \text{ for }(t,s)\in
[a_2,b_2]\times [0,1].
\end{equation}
 \item[(4)]  The function $k_2(\cdot,s)$ is derivable in $\tau \in [0,1]$, with
$$
\dfrac{\partial k_2}{\partial t}(t,s)=\begin{cases} 0,&0\leq s<
t\leq 1\cr 1, & 0\leq t< s\leq 1,\cr\end{cases}
$$
and for  $\tau \in [ 0,1]$ 
\begin{equation*}
\lim_{t\rightarrow \tau }\left| \frac{\partial k_2}{\partial
t}(t,s)- \frac{\partial k_2}{\partial t}(\tau ,s)\right|
=0,\;\text{ for almost every}\,s\in [ 0,1].
\end{equation*}
Moreover
$$
 \left|\frac{\partial k_2}{\partial t}(t,s)\right|\leq 1:=\psi_2(s)\text{    for   }t\in [0,1]\text{     and almost every }\,s\in [0,1].
 $$
\end{itemize}
By direct calculation we obtained
\begin{equation}\label{m}
m_1:=\left(\sup_{t\in [0,1]}\int_0^1k_1(t,s)\,ds\right)^{-1}=8,
\,\,\,\,\,\, m_2:=\left(\sup_{t\in
[0,1]}\int_0^1k_2(t,s)\,ds\right)^{-1}=2,
\end{equation}
\begin{equation}\label{M1}
M_1:=\left(\inf_{t\in[a_1,b_1]}\int_{a_1}^{b_1}k_1(t,s)ds\right)^{-1}=\begin{cases}\frac{2}{a_1(b_1-a_1)(2-a_1-b_1)},&\mbox
{ if }a_1+b_1\leq
1 \cr\\
\frac{2}{(1-b_1)(b_1^2-a_1^2)},& \mbox {if }a_1+b_1> 1,
\cr\end{cases} \,\,\,\,\,\,
\end{equation}
\begin{equation*}\label{M2}
M_2:=\left(\inf_{t\in[a_2,b_2]}\int_{a_2}^{b_2}k_2(t,s)ds\right)^{-1}=\frac{1}{a_2(b_2-a_2)}.
\end{equation*}
\smallskip

The following existence result for the system \eqref{PDE} is established in ~\cite{cia-pie}.
\begin{thm}~\cite{cia-pie}\label{ellyptic}
 Suppose that, for $i=1,2$, there exist
$\rho_i, s_i\in (0,+\infty )$, with $\rho _{i}<c_i\,s
_{i}$, such that the following conditions hold
\begin{equation*}\label{pde2}
\sup_{\Omega^{\rho_1,\rho_2}}
f_i(r,w_1,w_2,z_1,z_2)<\frac{m_i}{\displaystyle\sup_{t \in
[0,1]}p(t)}\,\rho_i
\end{equation*}
and
\begin{equation*}\label{pde3}
\inf_{A_i^{s_1,s_2}}
f_i(r,w_1,w_2,z_1,z_2)>\frac{M_i}{\displaystyle\inf_{t
\in [a_i,b_i]}p(t)}\,s_i,
\end{equation*}
where 
\begin{align*}
\Omega^{\rho_1,\rho_2}&=[ R_0,R_1]\times \left [0,
\rho_1\right]\times\left [0, \rho_2\right]\times
\left[0, +\infty\right)^2,\\
A_1^{s_1,s_2}&=[
\min\{r(a_1),r(b_1)\},\max\{r(a_1),r(b_1)\}]\times\left[s_1,\frac{s_1}{c_1}\right]\times\left[0,\frac{s_2}{c_2}\right]\times\left[0, +\infty\right)^2,\\
A_2^{s_1,s_2}&=[\min\{r(a_2),r(b_2)\},\max\{r(a_2),r(b_2)\}]\times\left[0,\frac{s_2}{c_2}\right]\times\left[s_2,\frac{s_2}{c_2}\right]\times\left[0, +\infty\right)^2.\\
\end{align*}
Then the system (\ref{PDE}) has at least one positive radial
solution. 
\end{thm}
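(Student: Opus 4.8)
\noindent\emph{Proof strategy.} The plan is to realize the positive radial solutions of \eqref{PDE} as fixed points, in a suitable cone, of the integral operator $\mathcal T$, and to locate one of them by a fixed point index computation, following \cite{cia-pie}. First I would use the radial change of variable $r=r(t)$ recalled above to pass from \eqref{PDE} to the ODE system \eqref{1syst}, and rewrite the latter as the fixed point problem $(u,v)=\mathcal T(u,v)$ for the operator $\mathcal T=(\mathcal T_1,\mathcal T_2)$ of \eqref{operT} on $X:=C^1_{\omega_1}[0,1]\times C^1_{\omega_2}[0,1]$, with $\omega_1(t)=t(1-t)$ and $\omega_2(t)=t$. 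The relevant cone is the product $\mathcal K:=\mathcal K_{\omega_1}\times\mathcal K_{\omega_2}$, where $\mathcal K_{\omega_1}$ is built on the interval $[a_1,b_1]$ and the constant $c_1$, and $\mathcal K_{\omega_2}$ on $[a_2,b_2]$ and $c_2$. One then checks that $\mathcal T$ is compact (by a classical Ascoli--Arzel\`{a} argument, using the continuity of $\partial_t k_i$ recorded in (2), (4) and the bounds $|\partial_t k_i(t,s)|\le\psi_i(s)$) and that $\mathcal T(\mathcal K)\subseteq\mathcal K$: since $g_i,k_i\ge 0$ we get $\mathcal T_i(u,v)\ge 0$; since $-\mathcal T_i(u,v)''=g_i\ge 0$, the function $\mathcal T_i(u,v)$ is concave and vanishes at the endpoints prescribed in \eqref{1syst}, which forces $\|\mathcal T_i(u,v)'\|_{\omega_i}\le\|\mathcal T_i(u,v)\|_\infty$; and \eqref{k1}, \eqref{k2} give $\min_{[a_i,b_i]}\mathcal T_i(u,v)\ge c_i\|\mathcal T_i(u,v)\|_\infty$.

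Next I would compute the index on a small neighbourhood of the origin. Put $\mathcal V:=\{(u,v)\in\mathcal K:\ \|u\|_\infty<\rho_1,\ \|v\|_\infty<\rho_2\}$. For $(u,v)\in\overline{\mathcal V}$ the point $(r(s),u(s),v(s),|u'(s)|/|r'(s)|,|v'(s)|/|r'(s)|)$ lies in $\Omega^{\rho_1,\rho_2}$ for a.e.\ $s$, so the first hypothesis gives $g_i(s,\cdots)=p(s)f_i(\cdots)\le(\sup_{[0,1]}p)\sup_{\Omega^{\rho_1,\rho_2}}f_i<m_i\rho_i$, whence
$$
\|\mathcal T_i(u,v)\|_\infty\le\Bigl(\sup_{t\in[0,1]}\int_0^1 k_i(t,s)\,ds\Bigr)\sup_s g_i<m_i^{-1}m_i\rho_i=\rho_i .
$$
This excludes $\mathcal T(u,v)=\lambda(u,v)$ with $\lambda\ge 1$ for $(u,v)\in\partial\mathcal V$ (if, say, $\|u\|_\infty=\rho_1$, then $\|\mathcal T_1(u,v)\|_\infty<\rho_1=\|u\|_\infty$), so by the standard index lemma $i_{\mathcal K}(\mathcal T,\mathcal V)=1$.

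Then I would compute the index on a larger set. Put $\mathcal W:=\{(u,v)\in\mathcal K:\ \min_{[a_1,b_1]}u<s_1,\ \min_{[a_2,b_2]}v<s_2\}$, which is open in $\mathcal K$ and bounded (on it $\|u\|_\infty\le s_1/c_1$ and $\|v\|_\infty\le s_2/c_2$). Fixing $h:=(\mathbf{1},\mathbf{1})\in\mathcal K\setminus\{0\}$, I would show $(u,v)\ne\mathcal T(u,v)+\mu h$ for all $\mu>0$ and $(u,v)\in\partial\mathcal W$. Indeed, if, say, $\min_{[a_1,b_1]}u=s_1$, then for $t\in[a_1,b_1]$ we have $u(t)\in[s_1,s_1/c_1]$ and $v(t)\in[0,s_2/c_2]$, so $(r(t),u(t),v(t),\cdot,\cdot)\in A_1^{s_1,s_2}$ and the second hypothesis gives $g_1(t,\cdots)=p(t)f_1(\cdots)>M_1 s_1$ there; then \eqref{k1} and \eqref{M1} yield
$$
\mathcal T_1(u,v)(t)\ge\int_{a_1}^{b_1}k_1(t,s)g_1(s,\cdots)\,ds\ge M_1 s_1\inf_{t\in[a_1,b_1]}\int_{a_1}^{b_1}k_1(t,s)\,ds=s_1
$$
for every $t\in[a_1,b_1]$, so $\min_{[a_1,b_1]}\mathcal T_1(u,v)\ge s_1$, which is incompatible with $u=\mathcal T_1(u,v)+\mu$; the case $\min_{[a_2,b_2]}v=s_2$ is symmetric. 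Hence $i_{\mathcal K}(\mathcal T,\mathcal W)=0$.

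Finally, since $c_i<1$ and $\rho_i<c_is_i$ one has $\overline{\mathcal V}\subset\mathcal W$, so the additivity (excision) property of the index gives $0=i_{\mathcal K}(\mathcal T,\mathcal W)=i_{\mathcal K}(\mathcal T,\mathcal V)+i_{\mathcal K}(\mathcal T,\mathcal W\setminus\overline{\mathcal V})=1+i_{\mathcal K}(\mathcal T,\mathcal W\setminus\overline{\mathcal V})$, hence $i_{\mathcal K}(\mathcal T,\mathcal W\setminus\overline{\mathcal V})=-1\ne 0$ and $\mathcal T$ has a fixed point $(u,v)\in\mathcal W\setminus\overline{\mathcal V}$. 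Being outside $\overline{\mathcal V}$ it is nontrivial, and the cone inequalities make $u$ positive on $[a_1,b_1]$ and $v$ positive on $[a_2,b_2]$; undoing the change of variable turns $(u,v)$ into a positive radial solution of \eqref{PDE}. The step I expect to be the real obstacle is showing that $\mathcal T$ is a \emph{compact self-map of $\mathcal K$ in the $C^1_\omega$ topology}: one must control $\mathcal T_i(u,v)'$ near the endpoints where $\omega_i$ degenerates (the concavity remark above is the key point for the self-map property) and handle the genuine dependence of the $f_i$ on $|u'|,|v'|$ — which is exactly why the $C^1_\omega$ setting is needed, even though this dependence is harmless in the index estimates, since there the hypotheses are imposed uniformly over the gradient variables $z_1,z_2\in[0,+\infty)$.
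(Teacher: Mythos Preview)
The paper itself does not give a proof of this theorem; it is simply recalled from \cite{cia-pie}. Your argument is correct and reproduces the standard fixed-point-index scheme in the cone $\mathcal K$ that \cite{cia-pie} uses and that the present paper summarizes around Theorem~\ref{index}: compactness and $\mathcal K$-invariance of $\mathcal T$, index $1$ on $K_{\rho_1,\rho_2}$ from $\|\mathcal T_i(u,v)\|_\infty<\rho_i$, index $0$ on $\{(u,v)\in\mathcal K:\min_{[a_i,b_i]}u_i<s_i,\ i=1,2\}$ from $\min_{[a_i,b_i]}\mathcal T_i(u,v)\ge s_i$, and then excision using $\rho_i<c_is_i<s_i$.
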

\smallskip

The following theorem follows from classical
results about fixed point index (more details can be seen, for example, in
~\cite{Amann-rev, guolak}).
\begin{thm} \label{index}Let $K$ be a cone in an ordered Banach space $X$. Let $\Omega $ be
an open bounded subset with $0 \in \Omega\cap K$ and
$\overline{\Omega \cap K}\neq K$.  Let $\Omega ^{1}$ be open in
$X$ with $\overline{\Omega ^{1}}\subset \Omega \cap K$. Let
$F:\overline{\Omega \cap K}\rightarrow K$ be a compact map.
 Suppose that
\begin{itemize}
\item[(1)]$Fx\neq \mu x$ for all $x\in\partial( \Omega \cap K)$
and for all $\mu \geq 1$.
\item[(2)] There exists $h\in K\setminus \{0\}$ such that $x\neq Fx+\lambda h$ for all $x\in \partial (\Omega^1 \cap K)$ and all $\lambda
\geq0$.
\end{itemize}
Then $F$ has at least one fixed point $x \in (\Omega \cap
K)\setminus\overline{(\Omega^{1}\cap K)}$.\\
Denoting by $i_K(F,U)$ the fixed point index of $F$ in some
$U\subset X$, $$i_{K}(F,\Omega \cap K)=1 \mbox{ and }
i_{K}(F,\Omega^{1} \cap K)=0\,.$$
 The same result holds if
$$i_{K}(F,\Omega \cap K)=0 \mbox{ and }
i_{K}(F,\Omega^{1} \cap K)=1\,.$$
\end{thm}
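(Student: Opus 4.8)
The plan is to deduce Theorem~\ref{index} entirely from the four classical properties of the fixed point index of a compact map on a cone --- normalization, homotopy invariance, additivity (excision) and the solution property; no new idea is needed beyond checking that the two homotopies one writes down are admissible on the appropriate relative boundaries and stay cone-valued and compact.

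\textbf{Index $1$ on $\Omega\cap K$.} First I would prove $i_K(F,\Omega\cap K)=1$. Since $\Omega$ is open and $0\in\Omega\cap K$, the point $0$ is interior to $\Omega\cap K$ in the topology relative to $K$, so $0\notin\partial(\Omega\cap K)$. Consider $H(\mu,x)=\mu Fx$ for $\mu\in[0,1]$; it is compact because $F$ is compact and $[0,1]$ is compact, and it takes values in $K$ because $K$ is a cone. If $H(\mu,x)=x$ for some $x\in\partial(\Omega\cap K)$ then $\mu\neq 0$ (otherwise $x=0$), hence $Fx=\mu^{-1}x$ with $\mu^{-1}\geq 1$, contradicting hypothesis~(1). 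Thus the homotopy is admissible and, by homotopy invariance and normalization, $i_K(F,\Omega\cap K)=i_K(0,\Omega\cap K)=1$.

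\textbf{Index $0$ on $\Omega^1\cap K$.} Next I would show $i_K(F,\Omega^1\cap K)=0$. The sets $\overline{\Omega\cap K}$ and $\overline{\Omega^1\cap K}\subset\Omega\cap K$ are bounded and $F$ is compact, so $\|Fx\|$ is bounded on $\overline{\Omega^1\cap K}$; from $x=Fx+\lambda h$ one gets $\lambda\|h\|=\|x-Fx\|\leq\|x\|+\|Fx\|\leq C$, so choosing $\lambda_0$ large enough the map $F+\lambda_0 h$ has no fixed point at all in $\overline{\Omega^1\cap K}$, whence $i_K(F+\lambda_0 h,\Omega^1\cap K)=0$ by the solution property. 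The homotopy $G(\lambda,x)=Fx+\lambda h$, $\lambda\in[0,\lambda_0]$, is compact, cone-valued, and admissible on $\partial(\Omega^1\cap K)$ precisely by hypothesis~(2); homotopy invariance then gives $i_K(F,\Omega^1\cap K)=i_K(F+\lambda_0 h,\Omega^1\cap K)=0$.

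\textbf{Additivity and the fixed point.} Finally, set $U:=(\Omega\cap K)\setminus\overline{\Omega^1\cap K}$, which is open relative to $K$ (as $\Omega^1$ is open in $X$) and disjoint from $\Omega^1\cap K$; moreover $F$ has no fixed point on $(\Omega\cap K)\setminus\big((\Omega^1\cap K)\cup U\big)\subseteq\partial(\Omega^1\cap K)$ by~(2) with $\lambda=0$. Additivity then yields $1=i_K(F,\Omega\cap K)=i_K(F,\Omega^1\cap K)+i_K(F,U)=i_K(F,U)$, so $i_K(F,U)\neq 0$ and the solution property produces a fixed point of $F$ in $U$. The symmetric case, where the mirror hypotheses give $i_K(F,\Omega\cap K)=0$ and $i_K(F,\Omega^1\cap K)=1$, is handled identically: additivity gives $i_K(F,U)=-1\neq 0$. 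I expect the only delicate point to be the bookkeeping of the relative topology of $K$ --- that $0$ is a relative interior point of $\Omega\cap K$, that $U$ is relatively open, and that the boundaries on which the two homotopies must be tested are exactly those addressed by conditions~(1) and~(2) --- rather than any genuine analytic difficulty.
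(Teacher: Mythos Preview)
Your proof is correct and is precisely the classical argument one finds in the references the paper cites. Note, however, that the paper itself does \emph{not} give a proof of this theorem: it simply states it as a known consequence of standard fixed point index theory, referring the reader to Amann~\cite{Amann-rev} and Guo--Lakshmikantham~\cite{guolak}. So there is no ``paper's own proof'' to compare with; your write-up is exactly the standard derivation from normalization, homotopy invariance, additivity and the solution property that those references contain.
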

Now we fix
 \begin{equation*}\label{ci}
 [a_1,b_1] \subset(0,1),\, \, c_1=\min\{a_1,1-b_1\}, \,\, [a_2,b_2] \subset(0,1] \text{ and } c_2=a_2, 
 \end{equation*} 
and we consider  the cone
 $$\mathcal K:= \mathcal K_{\omega_1}\times \mathcal K_{\omega_2}$$   in $C_{\omega_1}^{1}[0,1]\times C_{\omega_2}^{1}[0,1]$, where $\mathcal K_{\omega_i}$ is
\begin{eqnarray*}\label{cone} \mathcal K_{\omega_i}:=\left\{ w\in C_{\omega_i}^{1}[0,\,1]:w\geq 0,\,\underset{t\in [ a_i,b_i]}%
{\min }w(t)\geq c_i || w|| _{\infty},\,\,  || w|| _{\infty}\geq \|w'\|_{\omega_i}\right\}.
\end{eqnarray*}
We have showed in Section 2 that the cone $\mathcal K$ is total and in ~\cite{cia-pie} that  is $\mathcal T-$invariant.
\\
A \emph{positive solution} of the system (\ref{PDE}) means
a solution $(u,v)\in \mathcal K$ of (\ref{1syst})  such that $\|(u,v)\|> 0$.
\smallskip

In order to use the fixed point index,  we utilize the open bounded sets (relative to $\mathcal K$), for
$\rho_1,\rho_2>0$,
\begin{equation*}
K_{\rho _{1},\rho _{2}}:=\{(w_1,w_2)\in \mathcal K:|| w_1|| <\rho _{1}\ \text{ and }\ || w_2|| <\rho _{2}\},
\end{equation*}
for which holds the  property:
\begin{center}
 $(w_1,w_2)\in\partial K_{\rho _{1},\rho _{2}}$ if and only if
$(w_{1},w_{2})\in \mathcal K$ and for some $i\in \{1,2\}$ $\|w_i\|_{\infty}=\rho_i$ \text{ and } $c_i \rho_i \le
w_i(t)\le \rho_i$ for $t\in[a_i,b_i]$.
\end{center}
\section{Characteristic values of linear operators}
Let $L:X\to X$ be a linear operator on a Banach space $X$.  A number $\lambda$ is said an \textit{eigenvalue} of  $L$ with corresponding eigenfunction $\varphi$ if $\varphi \neq 0$ and ${\lambda}\varphi=L \varphi$. The reciprocals of nonzero
eigenvalues are called \emph{characteristic values} of $L$.
The \textit{spectral radius} of $L$  is given by $r(L):=\displaystyle\lim_{n\to\infty}\|L^n\|^{\frac{1}{n}}$ and its \textit{principal characteristic value} by $\mu(L)=1/r(L)$ .\\
We give the statements of the main tools in this section.
 \begin{thm}{\emph (Krein-Rutman Theorem)} \label{kre}~\cite{kre}\\
Assume that $K$ is a total cone in a real Banach space $X$ and $L:X\to X$  is a compact linear operator such that $L(K)\subset K$ and $r(L)>0$. Then there exists a nonzero element $u \in K$ such that $Lu=r(L) u$.
\end{thm}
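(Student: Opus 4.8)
The plan is to reduce the theorem to the statement that $r:=r(L)>0$ is itself an eigenvalue of $L$ with an eigenvector lying in $K\setminus\{0\}$, and then to produce that eigenvector by a compactness argument. The three mechanisms are: positivity of $L$ (to keep every object inside the cone), compactness of $L$ (to extract a convergent subsequence), and closedness of $K$ (so the limit stays in $K$). Totality of $K$ --- the property established for $\mathcal K$ in Section~2 --- enters only to guarantee that the dual cone $K^{*}$ is rich enough to detect the spectral radius.

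The first point is that for $\lambda>r$ the Neumann series $R_{\lambda}:=(\lambda\,\Id-L)^{-1}=\sum_{k\ge 0}\lambda^{-(k+1)}L^{k}$ converges in operator norm (Gelfand's formula, $\limsup_{k}\|L^{k}\|^{1/k}=r$), and since $L(K)\subseteq K$ and $K$ is a closed convex cone, $R_{\lambda}(K)\subseteq K$. The heart of the argument --- and the step I expect to be the main obstacle --- is the claim that there is some $u_{0}\in K$ with $\limsup_{\lambda\downarrow r}\|R_{\lambda}u_{0}\|=+\infty$. By totality, $\overline{K-K}=X$, so a Hahn--Banach separation of a point outside $K$ yields a nonzero $\phi_{0}$ in the dual cone $K^{*}=\{\phi\in X^{*}:\phi|_{K}\ge 0\}$, and one may take $u_{0}\in K$ with $\phi_{0}(u_{0})>0$ (otherwise $\phi_{0}$ would vanish on the dense set $K-K$). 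Then $h(\lambda):=\phi_{0}(R_{\lambda}u_{0})=\sum_{k\ge 0}\lambda^{-(k+1)}\phi_{0}(L^{k}u_{0})$ is, in the variable $1/\lambda$, a power series with non-negative coefficients $\phi_{0}(L^{k}u_{0})\ge 0$, since $L^{k}u_{0}\in K$. A Pringsheim-type argument applied to such series (tested against all $\phi\in K^{*}$, $u\in K$) shows that $r$ lies in $\sigma(L)$; hence $r$ is a boundary point of the spectrum, so $\|R_{\lambda}\|\to\infty$ as $\lambda\downarrow r$, and the positivity of $R_{\lambda}$ together with totality let one realize this blow-up along a single $u_{0}\in K$. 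The technically delicate part --- and the reason this step is the obstacle --- is the passage from these scalar (Pringsheim) facts back to the operator; for cones that are not normal one routes this through the adjoint $L^{*}$ acting on a weak-$*$ compact slice of $K^{*}$, which then has to be transferred back to $L$.

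Granting the claim, one finishes as follows. Choose $\lambda_{n}\downarrow r$ with $t_{n}:=\|R_{\lambda_{n}}u_{0}\|\to\infty$, and set $v_{n}:=t_{n}^{-1}R_{\lambda_{n}}u_{0}$, so that $v_{n}\in K$, $\|v_{n}\|=1$, and $(\lambda_{n}\,\Id-L)v_{n}=t_{n}^{-1}u_{0}$, i.e. $Lv_{n}=\lambda_{n}v_{n}-t_{n}^{-1}u_{0}$. Since $(v_{n})$ is bounded and $L$ is compact, $Lv_{n_{j}}\to w$ along a subsequence, whence $v_{n_{j}}=\lambda_{n_{j}}^{-1}\bigl(Lv_{n_{j}}+t_{n_{j}}^{-1}u_{0}\bigr)\to r^{-1}w=:u$. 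Closedness of $K$ gives $u\in K$; $\|v_{n_{j}}\|=1$ gives $\|u\|=1$, so $u\ne 0$; and continuity of $L$ gives $Lu=\lim Lv_{n_{j}}=w=ru$. Thus $u\in K\setminus\{0\}$ satisfies $Lu=r(L)u$, as required.

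Finally I would note why the familiar shortcut is unavailable: one cannot simply perturb $L$ by a strongly positive finite-rank operator and pass to the limit, since the cone $\mathcal K$ (like $\mathcal P_{\omega}$) has empty interior and thus contains no strictly positive element to perturb with. This is exactly why totality of the cone, rather than a stronger hypothesis, is the natural assumption, and why the limiting argument of the two preceding paragraphs is genuinely needed.
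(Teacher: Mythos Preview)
The paper does not give its own proof of this statement: Theorem~\ref{kre} is the classical Krein--Rutman theorem, stated with a citation to~\cite{kre} and then used as a black box (in Theorem~\ref{specrad}) once the totality of $\mathcal K_{\omega}$ has been established in Section~2. There is therefore nothing in the paper to compare your argument against line by line.

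That said, your sketch follows one of the standard resolvent-based proofs. The overall architecture is correct: positivity of $R_{\lambda}$ for $\lambda>r$, a Pringsheim argument to place $r$ in $\sigma(L)$, blow-up of the resolvent at the spectral boundary, and a compactness extraction to produce the eigenvector in $K$. You yourself flag the genuine soft spot: passing from $\|R_{\lambda}\|\to\infty$ to $\|R_{\lambda}u_{0}\|\to\infty$ for a \emph{single} $u_{0}\in K$. Uniform boundedness only gives some $x\in X$ with unbounded $\|R_{\lambda_{n}}x\|$; to land in $K$ you really do need totality, writing $x$ (or an approximant) as a difference of cone elements and observing that at least one summand must blow up. Your gesture toward the adjoint $L^{*}$ on a weak-$*$ compact slice of $K^{*}$ is the right escape hatch when $K$ is not normal, but as written it is only a pointer, not an argument. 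For a self-contained write-up you would need to either complete that dual detour or invoke a reference (e.g.\ Deimling, \emph{Nonlinear Functional Analysis}, Theorem~19.2, or Amann~\cite{Amann-rev}). For the purposes of this paper, however, citing~\cite{kre} is entirely adequate, and no proof is expected.
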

\begin{defn}\cite{kra, kra1} A positive bounded linear operator $L: X \to X$ is said $u_0 -$\textit{positive relative to the cone} $K$ if there exists $u_0 \in K \setminus\{0\}$ such that for every  $u\in K \setminus\{0\}$ there are constants $d_{2,u}\geq d_{1,u}> 0$ such that
$$d_{1,u} \,u_0\le Lu\le d_{2,u}\,u_0.$$
\end{defn} 
\begin{thm}{\emph (Comparison Theorem)}\label{comparison}~\cite{kee-tra}\\
Let $K$ be a cone in a Banach space $X$ and let $L, S$ be  bounded linear operators, with $L\leq S$. Assume that at least one of the operators is $u_0-$positive on $K$. If there exist
\begin{enumerate}
\item $u_1\in K\setminus\{0\}$ and $\lambda_1>0$ such that $Lu_1 \geq\lambda_1u_1$;
\item $u_2\in K\setminus\{0\}$ and $\lambda_2>0$ such that $Su_2 \leq \lambda_2u_2$,
\end{enumerate}
then $\lambda_1\le \lambda_2$ and, if $\lambda_1=\lambda_2$, $u_1$ is a scalar multiple of $u_2$.
\end{thm}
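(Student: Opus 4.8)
The plan is to reduce the statement to a single $u_0$-positive operator and then to a one–dimensional comparison. Write $x\le y$ for $y-x\in K$, and recall that $K$ is a closed, proper cone, so that $\le$ is a partial order, stable under multiplication by nonnegative scalars and under passage to limits. First I would observe that, by $L\le S$, the $u_0$-positive operator among $L,S$ — call it $T$ — automatically satisfies $Tu_1\ge\lambda_1 u_1$ and $Tu_2\le\lambda_2 u_2$: if $T=L$ this is hypothesis (1) together with $Lu_2\le Su_2\le\lambda_2 u_2$, while if $T=S$ it is hypothesis (2) together with $Su_1\ge Lu_1\ge\lambda_1 u_1$. Hence it suffices to prove: if $T$ is $u_0$-positive, $u_1,u_2\in K\setminus\{0\}$, $\lambda_1,\lambda_2>0$, $Tu_1\ge\lambda_1 u_1$ and $Tu_2\le\lambda_2 u_2$, then $\lambda_1\le\lambda_2$, with equality only if $u_1$ is a scalar multiple of $u_2$.

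To each such pair I attach the number $t_0:=\inf\{t>0:u_1\le t u_2\}$, and I would first check that this infimum is over a nonempty set and is strictly positive. From $u_1\in K\setminus\{0\}$ and $u_0$-positivity, $Tu_1\le d_{2,u_1}u_0$, and $Tu_1\ge\lambda_1 u_1$ then gives $u_1\le\lambda_1^{-1}d_{2,u_1}u_0$; similarly $Tu_2\ge d_{1,u_2}u_0$ together with $Tu_2\le\lambda_2 u_2$ gives $u_2\ge\lambda_2^{-1}d_{1,u_2}u_0$. Eliminating $u_0$ between these two estimates yields $u_1\le c\,u_2$ for some $c>0$, so the defining set is nonempty. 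Closedness of $K$ gives $u_1\le t_0 u_2$; and $t_0>0$, for otherwise $u_1\le\varepsilon u_2$ for every $\varepsilon>0$, whence (letting $\varepsilon\to0$) $-u_1\in K$ and thus $u_1=0$, a contradiction.

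Applying $T$ to $u_1\le t_0 u_2$ and using $Tu_1\ge\lambda_1 u_1$, $Tu_2\le\lambda_2 u_2$ gives the chain
$$\lambda_1 u_1\ \le\ Tu_1\ \le\ t_0\,Tu_2\ \le\ t_0\lambda_2 u_2 ,$$
so $u_1\le(t_0\lambda_2/\lambda_1)u_2$; minimality of $t_0$ then forces $t_0\le t_0\lambda_2/\lambda_1$, that is $\lambda_1\le\lambda_2$. Now suppose $\lambda_1=\lambda_2=:\lambda$ and put $w:=t_0 u_2-u_1\in K$; I claim $w=0$. If not, the displayed chain gives $Tw=t_0\,Tu_2-Tu_1\le t_0\lambda u_2-\lambda u_1=\lambda w$; then $u_0$-positivity applied to $w$ gives $Tw\ge d_{1,w}u_0$, hence $w\ge(d_{1,w}/\lambda)u_0$, while $u_0$-positivity applied to $u_1$ gives $u_1\le(d_{2,u_1}/\lambda)u_0$. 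Eliminating $u_0$ once more produces $c'>0$ with $u_1\le c'w=c'(t_0 u_2-u_1)$, i.e. $u_1\le\frac{c't_0}{1+c'}u_2$, and $\frac{c't_0}{1+c'}<t_0$ contradicts the definition of $t_0$. Hence $w=0$ and $u_1=t_0 u_2$.

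The manipulations above (transitivity and limit–stability of $\le$, attainment of the infimum, scaling of cone inequalities) are the routine part, immediate from $K$ being a closed proper cone. The one place that calls for an idea is the equality case: the contradiction is reached not by comparing the auxiliary element $w$ with $u_2$ directly, but by bounding $u_1$ \emph{from above} by a multiple of $w$, which forces one to invoke $u_0$-positivity twice — once on $w$ and once on $u_1$ — so that the common reference element $u_0$ can be cancelled.
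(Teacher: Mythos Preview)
The paper does not prove this theorem: it is quoted verbatim from Keener--Travis~\cite{kee-tra} and used as a tool in Section~5, with no accompanying argument. Hence there is no ``paper's own proof'' to compare against.

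That said, your argument is correct and is essentially the classical one. The reduction to a single $u_0$-positive operator $T$ is clean (note that the definition in the paper already requires a $u_0$-positive operator to be positive, so the monotonicity step $u_1\le t_0u_2\Rightarrow Tu_1\le t_0Tu_2$ is justified). The key device---defining $t_0=\inf\{t>0:u_1\le t u_2\}$, showing it is finite via the two-sided $u_0$-bounds, and then squeezing the chain $\lambda_1u_1\le Tu_1\le t_0Tu_2\le t_0\lambda_2u_2$---is exactly the standard Krasnosel'ski\u\i{} argument. Your handling of the equality case is also right: from $Tw\le\lambda w$ and $u_0$-positivity applied to $w$ and to $u_1$ you extract $u_1\le c'w$ for some $c'>0$, which gives $u_1\le\frac{c't_0}{1+c'}u_2$ and contradicts the minimality of $t_0$. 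The only cosmetic remark is that in deriving $Tw\le\lambda w$ you might spell out that $t_0(Tu_2-\lambda u_2)\le 0\le Tu_1-\lambda u_1$, since the displayed chain alone gives $Tu_1\le t_0\lambda u_2$ but not immediately $t_0Tu_2-Tu_1\le\lambda w$; this is a one-line check.
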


In order to state the eigenvalue criteria, consider the linear Hammerstein operator $\mathcal L$ on $C^1_{\omega_1}[0,1]\times C^1_{\omega_2}[0,1]$, associate to the operator $\mathcal T$,  defined by, for $t\in [0,1]$,
\begin{equation*}\label{opL}
\mathcal L(u_1,u_2)(t):=
\left(
\begin{array}{c}
 \displaystyle\int_{0}^{1}k_1(t,s)u_1(s)\,ds\\
\displaystyle\int_{0}^{1}k_2(t,s)u_2(s)\,ds%
\end{array}
\right)
:=
\left(
\begin{array}{c}
\mathcal L_1u_1(t) \\
\mathcal L_2u_2(t)%
\end{array}
\right) .
\end{equation*}
\begin{thm}\label{lcomp}
The operator $\mathcal L$ is compact and map $\mathcal P=\mathcal P_{\omega_1}\times \mathcal P_{\omega_2}$ into $\mathcal K$.
\end{thm}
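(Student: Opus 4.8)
The plan is to use that $\mathcal L=(\mathcal L_1,\mathcal L_2)$ acts componentwise, so both assertions reduce to statements about the scalar operators $\mathcal L_i w(t)=\int_0^1 k_i(t,s)w(s)\,ds$ on $C^1_{\omega_i}[0,1]$, $i=1,2$.

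\emph{Compactness.} Since $\omega_i(t)\le 1$ on $[0,1]$ one has $\|w'\|_{\omega_i}\le\|w'\|_\infty$, hence $\|w\|\le\|w\|_{C^1[0,1]}$, so the inclusion $C^1[0,1]\hookrightarrow C^1_{\omega_i}[0,1]$ is continuous; moreover $\|w\|_\infty\le\|w\|$, so a bounded subset of $C^1_{\omega_i}[0,1]$ is bounded in $C[0,1]$. It therefore suffices to show that $\mathcal L_i$ sends bounded subsets of $C[0,1]$ into relatively compact subsets of $C^1[0,1]$. If $\|w\|_\infty\le R$, then $\|\mathcal L_i w\|_\infty\le R/m_i$ by \eqref{m}, while a direct computation from the explicit form of $k_i$ (consistent with properties $(2)$ and $(4)$) gives $(\mathcal L_1 w)'(t)=-\int_0^t s\,w(s)\,ds+\int_t^1(1-s)w(s)\,ds$ and $(\mathcal L_2 w)'(t)=\int_t^1 w(s)\,ds$, which are bounded by a constant times $R$ and, having second derivative $-w$, are Lipschitz in $t$ with constant $R$. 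Arzel\`a--Ascoli then yields relative compactness in $C^1[0,1]$, and since $\mathcal L_i$ is linear and bounded, $\mathcal L$ is compact. (This is the same argument used for $\mathcal T$ in \cite{cia-pie}.)

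\emph{Invariance $\mathcal L(\mathcal P)\subset\mathcal K$.} Let $u_i\in\mathcal P_{\omega_i}$, i.e.\ $u_i\in C[0,1]$ with $u_i\ge0$. Then $\mathcal L_i u_i\ge 0$ because $k_i\ge 0$, and $\mathcal L_i u_i\in C^1[0,1]\subset C^1_{\omega_i}[0,1]$ by the previous step. Integrating the two-sided kernel bounds \eqref{k1} and \eqref{k2}: from $k_i(t,s)\le\phi_i(s)$ we get $\|\mathcal L_i u_i\|_\infty\le\int_0^1\phi_i(s)u_i(s)\,ds$, and from $k_i(t,s)\ge c_i\phi_i(s)$ on $[a_i,b_i]\times[0,1]$ we get, for $t\in[a_i,b_i]$, $\mathcal L_i u_i(t)\ge c_i\int_0^1\phi_i(s)u_i(s)\,ds\ge c_i\|\mathcal L_i u_i\|_\infty$, which is the Harnack-type condition in the definition of $\mathcal K_{\omega_i}$.

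\emph{The derivative condition} $\|(\mathcal L_i u_i)'\|_{\omega_i}\le\|\mathcal L_i u_i\|_\infty$ is the only non-routine point, and I would prove it by a direct estimate on the explicit primitives above. Writing $v=\mathcal L_1 u_1$, $P(t)=\int_0^t s\,u_1(s)\,ds\ge 0$, $Q(t)=\int_t^1(1-s)u_1(s)\,ds\ge 0$, one has $v(t)=(1-t)P(t)+tQ(t)$ and $v'(t)=Q(t)-P(t)$, so $\omega_1(t)|v'(t)|=t(1-t)|Q(t)-P(t)|$; if $Q(t)\ge P(t)$ then $t(1-t)(Q(t)-P(t))\le(1-t)\,t\,Q(t)\le(1-t)v(t)\le\|v\|_\infty$, and if $P(t)>Q(t)$ then $t(1-t)(P(t)-Q(t))\le t\,(1-t)P(t)\le t\,v(t)\le\|v\|_\infty$. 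For the second component, $w=\mathcal L_2 u_2$ satisfies $w'(t)=\int_t^1 u_2(s)\,ds\ge0$ and $w(t)=\int_0^t s\,u_2(s)\,ds+t\int_t^1 u_2(s)\,ds\ge t\int_t^1 u_2(s)\,ds=\omega_2(t)|w'(t)|$, hence $\omega_2(t)|w'(t)|\le\|w\|_\infty$. (Conceptually, $v$ and $w$ are concave with $v(0)=v(1)=0$ and $w(0)=w'(1)=0$, and these inequalities say that a weighted slope never exceeds the height.) This gives $\mathcal L_i u_i\in\mathcal K_{\omega_i}$, whence $\mathcal L(\mathcal P)\subset\mathcal K$. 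I expect this last estimate to be the main obstacle; the compactness and the positivity/Harnack parts are routine and are essentially contained in \cite{cia-pie}.
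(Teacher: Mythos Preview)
Your proof is correct and follows essentially the same route as the paper: Arzel\`a--Ascoli for compactness, the two-sided kernel bounds \eqref{k1}--\eqref{k2} for the Harnack-type inequality, and a direct pointwise estimate of $\omega_i(t)|(\mathcal L_i u_i)'(t)|$ against $\mathcal L_i u_i(t)$ for the derivative condition. The only cosmetic difference is that the paper handles the $\mathcal L_1$ derivative via the triangle inequality $t(1-t)|Q-P|\le t(1-t)P+t(1-t)Q\le (1-t)P+tQ=\mathcal L_1 u_1(t)$ in one line, whereas you split into the cases $Q\ge P$ and $P>Q$; both arrive at the same bound $\omega_i(t)|(\mathcal L_i u_i)'(t)|\le \mathcal L_i u_i(t)$.
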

\begin{proof}
 Note that the operator $\mathcal L$ maps $\mathcal P$ into $\mathcal P$ because the kernels are positive functions; now we show that $\mathcal L$ maps $\mathcal P$ into $\mathcal K$.

By \eqref{k1} and \eqref{k2},
for every $u_i \in \mathcal P_{\omega_i}$ it follows
$$
\mathcal L_iu_i(t) \leq
\int_{0}^{1}\phi_i(s)u_i(s)ds
$$
and therefore $$|| \mathcal L_iu_i|| _{\infty}\leq
\int_{0}^{1}\phi_i(s)u_i(s)ds
<+\infty.$$
On the other hand, we have
\begin{equation*}\label{min}
\min_{t\in [a_i,b_i]}L_iu_i(t) \geq
c_i\int_{0}^{1}\phi_i(s)u_i(s)ds \geq c_i
||\mathcal L_iu_i|| _{\infty}.
\end{equation*}
Now  we prove that, if $u_1\in \mathcal P_{\omega_1}$, it holds
\begin{equation}\label{u'}
\|(\mathcal L_1u_1)'\|_{\omega_1}\leq \|\mathcal L_{1}u_1 \|_{\infty}.
\end{equation}
In fact we have
\begin{align*}
&t(1-t)|(\mathcal L_1u_1)'(t)|=\Big|-t(1-t)\int_0^t
su_1(s)ds
+t(1-t)\int_t^1
(1-s)u_1(s)ds\Big|\\
&\leq t(1-t)\int_0^ts u_1(s)ds+t(1-t)\int_t^1 (1-s)u_1(s)ds\\
&\leq (1-t)\int_0^tsu_1(s)ds+t\int_t^1 (1-s)u_1(s)ds=\mathcal L_1u_1(t)\leq \|\mathcal L_1u_1\|_{\infty}
\end{align*}
and consequently \eqref{u'} holds.

Analogously, for $u_2 \in \mathcal P_{\omega_2}$, we obtain
\begin{align*}
t|(\mathcal L_2u_2)'(t)| &= t\int_{t}^{1}u_2(s)ds\le 
\int_0^t su_2(s)ds+t\int_{t}^{1}u_2(s)ds=\mathcal L_2u_2(t)\leq \|\mathcal L_2u_2\|_{\infty}
\end{align*}
 and therefore we have
 \begin{equation*}\label{v'}
  \|(\mathcal L_2u_2))'\|_{\omega_2}\leq  \|\mathcal L_{2}u_2 \|_{\infty}.
\end{equation*}
Finally, by the properties of the Green's functions $k_i$ and using the
Arz\`{e}la-Ascoli Theorem, it follows that the operator $\mathcal L$ is
compact.
\end{proof}
\smallskip

\noindent
Since $\|w\|=\|w\|_{\infty}$ for each $w \in \mathcal K_{\omega_i}$,  the proof of the following theorem is analogous to the ones in ~\cite{jw-gi-jlms,webb-lan}  and  is reported for completeness.

\begin{thm}\label{specrad}
For $i=1,2$, the spectral radius of $\mathcal L_i$ is nonzero and is an eigenvalue of $\mathcal L_i$ with an eigenfunction in $\mathcal K_{\omega_i}$.
\end{thm}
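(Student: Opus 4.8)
The plan is to apply the Krein--Rutman Theorem (Theorem \ref{kre}) to each operator $\mathcal L_i$ individually, viewed as a compact linear operator on the Banach space $C^1_{\omega_i}[0,1]$ leaving the cone $\mathcal K_{\omega_i}$ invariant. Theorem \ref{lcomp} already gives compactness of $\mathcal L=(\mathcal L_1,\mathcal L_2)$ and the inclusion $\mathcal L(\mathcal P)\subset\mathcal K$; since $\mathcal L$ acts coordinatewise, this yields at once that each $\mathcal L_i$ is compact on $C^1_{\omega_i}[0,1]$ and maps $\mathcal P_{\omega_i}$, hence in particular $\mathcal K_{\omega_i}$, into $\mathcal K_{\omega_i}$. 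From Theorem \ref{total} the cone $\mathcal K_{\omega_i}$ is total in $C^1_{\omega_i}[0,1]$. Thus the only hypothesis of Krein--Rutman that remains to be checked is that $r(\mathcal L_i)>0$; once that is established, Krein--Rutman provides a nonzero $u_i\in\mathcal K_{\omega_i}$ with $\mathcal L_i u_i=r(\mathcal L_i)u_i$, which is exactly the assertion.

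To show $r(\mathcal L_i)>0$ I would exhibit an explicit element of the cone on which $\mathcal L_i$ acts by a positive lower bound, and then iterate. Pick $u_i\in\mathcal K_{\omega_i}\setminus\{0\}$ supported with a positive minimum on $[a_i,b_i]$ (for instance a suitable multiple of a tent-type function, or simply the constant function restricted appropriately so that it lies in the cone). Using the positivity estimates \eqref{k1}, \eqref{k2} for the kernels, namely $k_i(t,s)\geq c_i\phi_i(s)$ on $[a_i,b_i]\times[0,1]$, together with $\phi_i(s)=s(1-s)$ respectively $\phi_i(s)=s$ being strictly positive on $(0,1)$, one gets for $t\in[a_i,b_i]$
$$
\mathcal L_i u_i(t)=\int_0^1 k_i(t,s)u_i(s)\,ds\geq c_i\int_{a_i}^{b_i}\phi_i(s)u_i(s)\,ds\geq \kappa_i\,\big(\min_{[a_i,b_i]}u_i\big)\geq \kappa_i c_i\|u_i\|_\infty,
$$
for a positive constant $\kappa_i$ depending only on $[a_i,b_i]$. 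Hence $\mathcal L_i u_i\geq \delta_i\,\chi_i$ for a fixed reference function $\chi_i\in\mathcal K_{\omega_i}$ and $\delta_i>0$; iterating, $\mathcal L_i^n u_i\geq \delta_i^{\,n} c_*^{\,n}\chi_i$ for some $c_*>0$, which forces $\|\mathcal L_i^n\|^{1/n}$ to stay bounded away from zero and therefore $r(\mathcal L_i)=\lim_n\|\mathcal L_i^n\|^{1/n}>0$. Alternatively, one can invoke the Comparison Theorem (Theorem \ref{comparison}) after checking that $\mathcal L_i$ is $u_0$-positive on $\mathcal K_{\omega_i}$ — which the two-sided kernel bounds $c_i\phi_i(s)\le k_i(t,s)\le\phi_i(s)$ make straightforward with $u_0$ the function $\phi_i$ itself — and compare against a positive multiple of the identity on a reference eigenvector; either route gives the positivity of the spectral radius.

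The main obstacle is not any single hard estimate but rather the bookkeeping of working in the weighted $C^1$-norm rather than the sup-norm: a priori $r(\mathcal L_i)$ could differ depending on the ambient space, and one must make sure the lower bound on $\|\mathcal L_i^n\|$ is taken with respect to the $C^1_{\omega_i}$-norm. This is where the remark that $\|w\|=\|w\|_\infty$ for $w\in\mathcal K_{\omega_i}$ does the real work (as the paper anticipates in the sentence preceding the theorem): since $\mathcal L_i$ maps the whole positive cone into $\mathcal K_{\omega_i}$, the iterates $\mathcal L_i^n u_i$ all have $C^1_{\omega_i}$-norm equal to their sup-norm, so the sup-norm lower bound transfers verbatim to the operator norm on $C^1_{\omega_i}[0,1]$. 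The only other point requiring a line of care is that the eigenfunction delivered by Krein--Rutman, which lives in $\mathcal K_{\omega_i}$ by construction, is genuinely nonzero — immediate since $r(\mathcal L_i)>0$ rules out the trivial relation.
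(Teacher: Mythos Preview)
Your proposal is correct and follows essentially the same route as the paper: establish $r(\mathcal L_i)>0$ by iterating the kernel lower bound $k_i(t,s)\ge c_i\phi_i(s)$ on $[a_i,b_i]$ to produce a geometric lower bound on $\|\mathcal L_i^n\|$, then invoke Krein--Rutman. The paper carries out the iteration with explicit constants, obtaining $\mathcal L_i^n u_i(t)\ge c_i^{n+1}\|u_i\|\bigl(\int_{a_i}^{b_i}\phi_i(s)\,ds\bigr)^n$ and hence $r(\mathcal L_i)\ge c_i\int_{a_i}^{b_i}\phi_i(s)\,ds$; your sketch with unspecified $\kappa_i,\delta_i,c_*$ is the same argument. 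One minor imprecision: the line ``$\mathcal L_i u_i\ge\delta_i\chi_i$ for a fixed $\chi_i\in\mathcal K_{\omega_i}$'' asserts a global inequality on $[0,1]$ whereas your preceding estimate only holds on $[a_i,b_i]$; the clean fix (which the paper uses) is simply to track $\min_{[a_i,b_i]}\mathcal L_i^n u_i$ through the iteration, or equivalently to note that $\|\mathcal L_i u_i\|=\|\mathcal L_i u_i\|_\infty\ge\min_{[a_i,b_i]}\mathcal L_i u_i\ge\kappa_i c_i\|u_i\|$ and iterate the norm inequality directly. The paper also applies Krein--Rutman with the larger cone $\mathcal P_{\omega_i}$ and then upgrades the eigenfunction to $\mathcal K_{\omega_i}$ via $\varphi_i=r(\mathcal L_i)^{-1}\mathcal L_i\varphi_i\in\mathcal L_i(\mathcal P_{\omega_i})\subset\mathcal K_{\omega_i}$, whereas you apply it with $\mathcal K_{\omega_i}$ directly; both are valid.
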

\begin{proof} For $u_i \in \mathcal K_{\omega_i}$ and  $t \in [a_i,b_i]$ one has
\begin{align*}
&\mathcal L_iu_i(t)\geq c_i\int_{a_i}^{b_i}\phi_i(s)u_i(s)ds\geq
c_i\min_{t \in
[a_i,b_i]}u_i(t)\int_{a_i}^{b_i}\phi_i(s)ds\\
&\geq
c_i^2\|u_i\|_{\infty}\int_{a_i}^{b_i}\phi_i(s)ds=c_i^2\|u_i\|\int_{a_i}^{b_i}\phi_i(s)ds.
\end{align*}
Then we have
\begin{align*}
&\mathcal L_i^2u_i(t)=\int_{0}^{1}k_i(t,s)\mathcal L_iu_i(s)ds\geq \int_{a_i}^{b_i}k_i(t,s)\mathcal L_iu_i(s)ds\\
&\geq c_i^2\|u_i\|\int_{a_i}^{b_i}\phi_i(s)ds\,\int_{a_i}^{b_i}k_i(t,s)ds\geq c_i^3\|u_i\|
\left(\int_{a_i}^{b_i}\phi_i(s)ds\right)^2
\end{align*}
and analogously we get
\begin{align*} \mathcal L_i^nu_i(t)\geq  c_i^{n+1}\|u_i\|
\left( \int_{a_i}^{b_i}\phi_i(s)ds\right)^{n}.
\end{align*}
Thus we obtain
\begin{align*} \|\mathcal L_i^n\|\|u_i\|\geq \|\mathcal L_i^n u_i\| \geq \mathcal L_i^nu_i(t)\geq c_i^{n+1}\|u_i\|
\left( \int_{a_i}^{b_i}\phi_i(s)ds\right)^{n},
\end{align*}
hence we have
\begin{align*} r(\mathcal L_i)=\lim_{n \to +\infty}\|\mathcal L_i^n\|^{\frac{1}{n}}\geq
c_i\int_{a_i}^{b_i}\phi_i(s)ds>0.
\end{align*}
Then, by Theorem \ref{kre}, $r(\mathcal L_i)$ is an eigenvalue of $\mathcal L_i$ with $\varphi_i$ eigenfunction in $\mathcal P_{\omega_i}$ and, as $\mathcal L_i$ maps $\mathcal P_{\omega_i}$ in $\mathcal K_{\omega_i}$, the
eigenfunction $\varphi_i\in \mathcal K_{\omega_i}$.\\ 
\end{proof}
\begin{rem}
Since the kernels $k_i$ satisfy the following symmetry properties, for all $t,s\in [0,1]$,
$$k_1(s,t)=k_1(t,s)\,\,\text{ and }\,\,k_2(1-s,1-t)=k_2(t,s),$$
Corollary 7.5 in ~\cite{jwgi-lms-II} assures that the linear operators $\mathcal L_i$ are  $u_0 -$positive relative to the cone $\mathcal P_{\omega_i}$ and therefore Theorem \ref{comparison} can be used in the sequel. \\
Moreover, by Theorems \ref{total}, \ref{lcomp} and \ref{specrad}, it follows that the operators $\mathcal L_i$ satisfy the hypotheses of Krein-Rutman Theorem.
\end{rem}
\section{Eigenvalue criteria for the existence of positive solutions}
In this section we give some results that determine  relationships between the upper and lower bounds of the nonlinearities $f_i$ on some stripes of $\mathbb R^n$  and the principal characteristic values  of two linear operators associated to $\mathcal T$.\\
Using  the principal characteristic value of $\mathcal L_i$, in the following two theorems we provide conditions assuring that the index of the operator $\mathcal T$ defined in \eqref{operT} is one in some suitable sets.\\
\\
Let $f_i$ be the nonlinearities of the system \eqref{PDE} .
\begin{thm}
\label{idx1aut1}
Assume that
\begin{enumerate}
\item[]  for $i=1,2$ there exist $\varepsilon_i>0$ and $\rho_i>0$ such that  the following condition holds:
\end{enumerate}
\begin{equation*}\label{eq1mu+}
\sup_{\Omega^{\rho_1,\rho_2}}
f_i(r,w_1,w_2,z_1,z_2) \leq
\frac{(\mu(\mathcal L_i)-\varepsilon_i)}{\displaystyle\sup_{t \in [0,1]}p(t)}w_i,
\end{equation*}
where
$$\Omega^{\rho_1,\rho_2}=[
R_0,R_1]\times[0,\rho_1]\times[0,\rho_2]\times[0, +\infty)^2.$$
Then $i_{\mathcal K}(\mathcal T,K_{\sigma_1,\sigma_2})=1$ for $\sigma_i\le\rho_i, \,i=1,2$.
\end{thm}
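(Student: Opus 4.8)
The plan is to reduce the index computation to the standard criterion recalled in Theorem~\ref{index}: it suffices to show that $\mathcal T(u_1,u_2)\neq\mu(u_1,u_2)$ for every $(u_1,u_2)\in\partial K_{\sigma_1,\sigma_2}$ and every $\mu\geq 1$. Indeed $\mathcal T$ is a compact self-map of $\mathcal K$ (the cone is $\mathcal T$-invariant by ~\cite{cia-pie}), and $K_{\sigma_1,\sigma_2}$ is open and bounded in $\mathcal K$ with $0\in K_{\sigma_1,\sigma_2}$ and $\overline{K_{\sigma_1,\sigma_2}}\neq\mathcal K$; hence the displayed non-solvability condition forces $i_{\mathcal K}(\mathcal T,K_{\sigma_1,\sigma_2})=1$ by the index-one part of Theorem~\ref{index}.

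I would argue by contradiction. Suppose $\mathcal T(u_1,u_2)=\mu(u_1,u_2)$ for some $(u_1,u_2)\in\partial K_{\sigma_1,\sigma_2}$ and some $\mu\geq1$. By the description of $\partial K_{\sigma_1,\sigma_2}$ in Section~3 there is $j\in\{1,2\}$ with $\|u_j\|_\infty=\sigma_j$; since $(u_1,u_2)\in\mathcal K=\mathcal K_{\omega_1}\times\mathcal K_{\omega_2}$ and $\|w\|=\|w\|_\infty$ on each $\mathcal K_{\omega_i}$, one also has $u_i\geq0$ and $\|u_i\|_\infty\leq\sigma_i\leq\rho_i$ for $i=1,2$, and $u_j\in\mathcal K_{\omega_j}\setminus\{0\}$ (because $\sigma_j>0$). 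Consequently, for every $s\in[0,1]$ the argument $\bigl(r(s),u_1(s),u_2(s),|u_1'(s)|/|r'(s)|,|u_2'(s)|/|r'(s)|\bigr)$ of $f_j$ lies in $\Omega^{\rho_1,\rho_2}$. First I rule out the degenerate regime $\mu(\mathcal L_j)\leq\varepsilon_j$: in that case the hypothesis forces $f_j\equiv0$ on $\Omega^{\rho_1,\rho_2}$, hence $\mathcal T_j(u_1,u_2)\equiv0$ and $\mu u_j\equiv0$, contradicting $\|u_j\|_\infty=\sigma_j>0$. So $\mu(\mathcal L_j)>\varepsilon_j$, and from the definition of $g_j$, the hypothesis on $f_j$, and $0\leq p(s)\leq\sup_{t}p(t)$ I obtain, for all $s\in[0,1]$,
\[
g_j(s,u_1(s),u_2(s),|u_1'(s)|,|u_2'(s)|)\ \leq\ \frac{p(s)}{\sup_{t}p(t)}\,\bigl(\mu(\mathcal L_j)-\varepsilon_j\bigr)u_j(s)\ \leq\ \bigl(\mu(\mathcal L_j)-\varepsilon_j\bigr)u_j(s).
\]

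Multiplying by $k_j(t,s)\geq0$, integrating in $s$ over $[0,1]$ and using $\mathcal T_j(u_1,u_2)=\mu u_j$, I get $\mu u_j(t)\leq\bigl(\mu(\mathcal L_j)-\varepsilon_j\bigr)(\mathcal L_ju_j)(t)$ for every $t\in[0,1]$. Since $\mathcal L_ju_j\in\mathcal K_{\omega_j}\subset C^1_{\omega_j}[0,1]$ by Theorem~\ref{lcomp} and $u_j\in C^1_{\omega_j}[0,1]$, this is precisely the cone inequality $\mathcal L_ju_j\geq\frac{\mu}{\mu(\mathcal L_j)-\varepsilon_j}\,u_j$ in $\mathcal P_{\omega_j}$. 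On the other hand, Theorem~\ref{specrad} provides $\varphi_j\in\mathcal K_{\omega_j}\setminus\{0\}$ with $\mathcal L_j\varphi_j=r(\mathcal L_j)\varphi_j$, so in particular $\mathcal L_j\varphi_j\leq r(\mathcal L_j)\varphi_j$. As $\mathcal L_j$ is $u_0$-positive relative to $\mathcal P_{\omega_j}$ (the Remark after Theorem~\ref{specrad}), the Comparison Theorem (Theorem~\ref{comparison}), applied with $L=S=\mathcal L_j$, $u_1=u_j$, $\lambda_1=\mu/(\mu(\mathcal L_j)-\varepsilon_j)$, $u_2=\varphi_j$, $\lambda_2=r(\mathcal L_j)$, yields $\mu/(\mu(\mathcal L_j)-\varepsilon_j)\leq r(\mathcal L_j)=1/\mu(\mathcal L_j)$, that is $\mu\,\mu(\mathcal L_j)\leq\mu(\mathcal L_j)-\varepsilon_j<\mu(\mathcal L_j)$, hence $\mu<1$, contradicting $\mu\geq1$. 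This contradiction establishes the claim.

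I do not expect a genuine obstacle here: the analytic and spectral prerequisites (compactness and the $\mathcal P\to\mathcal K$ mapping property of $\mathcal L$, Krein--Rutman via Theorem~\ref{kre} and Theorem~\ref{total}, the $u_0$-positivity, and the Comparison Theorem) have all been prepared in Sections~2--4, so the argument above is mainly bookkeeping. The two points that need care are the verification that every argument of $f_j$ remains inside the box $\Omega^{\rho_1,\rho_2}$ — which uses $\|u_i\|=\|u_i\|_\infty\leq\rho_i$ for elements of $\mathcal K$ — and the observation that the derived inequality for $\mathcal L_j$ must be interpreted as a cone inequality in $\mathcal P_{\omega_j}$ rather than merely a pointwise one, which is immediate from Theorem~\ref{lcomp}; the borderline regime $\varepsilon_j\geq\mu(\mathcal L_j)$ is dispatched as indicated.
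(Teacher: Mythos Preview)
Your proof is correct, but it takes a different route from the paper at the final step. Both arguments start identically: assume $\mu(u_1,u_2)=\mathcal T(u_1,u_2)$ on $\partial K_{\sigma_1,\sigma_2}$ with $\mu\geq 1$, and derive the pointwise inequality $u_j(t)\leq (\mu(\mathcal L_j)-\varepsilon_j)\,\mathcal L_j u_j(t)$. From here the paper simply \emph{iterates} this inequality, using the monotonicity of $\mathcal L_j$ on $\mathcal P_{\omega_j}$ to get $u_j(t)\leq (\mu(\mathcal L_j)-\varepsilon_j)^n\|\mathcal L_j^n\|\,\|u_j\|_\infty$, and then invokes the Gelfand formula $r(\mathcal L_j)=\lim_n\|\mathcal L_j^n\|^{1/n}$ to obtain $1\leq(\mu(\mathcal L_j)-\varepsilon_j)/\mu(\mathcal L_j)<1$. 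You instead bring in the Krein--Rutman eigenfunction and the Comparison Theorem~\ref{comparison} (with $L=S=\mathcal L_j$) to squeeze $\mu/(\mu(\mathcal L_j)-\varepsilon_j)\leq r(\mathcal L_j)$. The paper's iteration is more self-contained: it needs neither $u_0$-positivity nor the existence of an eigenfunction, only positivity of $\mathcal L_j$ and the spectral-radius formula. Your approach, on the other hand, is consistent with the machinery used later in Theorem~\ref{idx0aut1} (where the Comparison Theorem is genuinely needed), and you are more careful than the paper on two points: you explicitly select the index $j$ for which $\|u_j\|_\infty=\sigma_j>0$ (the paper just works with the first component without comment), and you dispose of the borderline case $\varepsilon_j\geq\mu(\mathcal L_j)$, which the paper's iteration would not handle cleanly.
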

\begin{proof}
 Let $\sigma_i\le\rho_i$ for $i=1,2$.  In order to show $ i_{\mathcal K}(\mathcal T,K_{\sigma_1, \sigma_2})=1$, we prove that $\mathcal T(u,v)\ne\lambda (u,v)$ for $(u,v)\in\partial K_{\sigma_1,\sigma_2}$ and $\lambda\ge 1$.
 Otherwise there exist $(u,v)\in\partial K_{\sigma_1, \sigma_2}$ and $\lambda\ge1$ such that $\lambda (u,v)=\mathcal T(u,v)$. Thus we have, for $ t\in [0,1]$, 
\begin{align*}
 u(t)\leq&\lambda u(t)=  \mathcal T_1(u,v)(t) = \int_0^1k_1(t,s)p(s)f_1\left(r(s),u(s),v(s),\frac{|u'(s)|}{|r'(s)|},\frac{|v'(s)|}{|r'(s)|}\right)ds \\ 
 & \le  \frac{ (\mu(\mathcal L_1)-\varepsilon_1)}{\displaystyle\sup_{t\in [0,1]}p(t)}\int_0^1k_1(t,s)p(s)u(s)ds\le   (\mu(\mathcal L_1)-\varepsilon_1)\int_0^1k_1(t,s)u(s)ds\\
 &=(\mu(\mathcal L_1)-\varepsilon_1)\mathcal L_1 u(t).
 \end{align*}
Therefore, by the monotone properties of the operator $\mathcal L_1$ we have, for $ t\in [0,1]$, 
\begin{align*}
u(t)\le &
(\mu(\mathcal L_1)-\varepsilon_1)\mathcal L_1[(\mu(L_1)-\varepsilon_1)\mathcal L_1u(t)]=(\mu(\mathcal L_1)-\varepsilon_1)^2\mathcal L_1^2u(t)\le\cdots\\
&\le(\mu(\mathcal L_1)-\varepsilon_1)^n\mathcal L_1^nu(t)\le(\mu(\mathcal L_1)-\varepsilon_1)^n\|\mathcal L_1^n\|\|u\|=(\mu(\mathcal L_1)-\varepsilon_1)^n\|\mathcal L_1^n\|\|u\|_{\infty};
 \end{align*}
thus, taking the norm, we obtain
$$1\le(\mu(\mathcal L_1)-\varepsilon_1)^n\|\mathcal L_1^n\|,$$ and then we get
$$1\le(\mu(\mathcal L_1)-\varepsilon_1)\lim_{n\to\infty}\|\mathcal L_1^n\|^\frac{1}{n}=\frac{\mu(\mathcal L_1)-\varepsilon_1}{\mu(\mathcal L_1)}<1,$$
a contradiction.
\end{proof}
\smallskip

\noindent
\begin{thm}
\label{idx1aut2}
Assume that
\begin{enumerate}
\item[]  for $i=1,2$ there exist $\varepsilon_i>0$ and $\theta_i>0$ such that the following condition holds:
\end{enumerate}
\begin{equation*}\label{eq1muin}
\sup_{E^{\theta_1,\theta_2}}f_{i}(r,w_1,w_2,z_1,z_2) \le
\frac{(\mu(\mathcal L_i)-\varepsilon_i)}{\displaystyle \sup_{t \in [0,1]}p(t)}w_i;
\end{equation*}
$$E^{\theta_1,\theta_2}=[
R_0,R_1]\times[\theta_1,+\infty)\times[\theta_2+\infty)\times[\theta_1, +\infty)\times [\theta_2,+\infty).$$
 Then there exist $\tau_1, \tau_2>0$  such that $i_{\mathcal K}(\mathcal T,K_{\sigma_1, \sigma_2})=1$  for each  $\sigma_i>\tau_i,$ $ i=1,2$.
\end{thm}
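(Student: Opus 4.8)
The plan is to invoke part~(1) of Theorem~\ref{index}: I will locate thresholds $\tau_1,\tau_2>0$ and prove that, as soon as $\sigma_i>\tau_i$ for $i=1,2$, the equation $\mathcal T(u,v)=\lambda(u,v)$ has no solution $(u,v)\in\partial K_{\sigma_1,\sigma_2}$ with $\lambda\geq1$; this gives $i_{\mathcal K}(\mathcal T,K_{\sigma_1,\sigma_2})=1$ at once.

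First I would recast the growth condition. Write $\nu_i:=\mu(\mathcal L_i)-\varepsilon_i$, which we may take in $(0,\mu(\mathcal L_i))$: indeed $0<r(\mathcal L_i)<+\infty$ (Theorem~\ref{specrad} and compactness of $\mathcal L_i$), so $0<\mu(\mathcal L_i)<+\infty$, and if $\varepsilon_i\geq\mu(\mathcal L_i)$ the assumption forces $f_i\equiv0$ on $E^{\theta_1,\theta_2}$ and remains true after shrinking $\varepsilon_i$. Using the continuity of $f_i$, I would then produce constants $C_i\geq0$ with
\begin{equation*}
f_i(r,w_1,w_2,z_1,z_2)\leq\frac{\nu_i}{\displaystyle\sup_{t\in[0,1]}p(t)}\,w_i+C_i\qquad\text{for }r\in[R_0,R_1],\ w_1,w_2,z_1,z_2\geq0,
\end{equation*}
the point being that on $E^{\theta_1,\theta_2}$ this is exactly the hypothesis (with $C_i=0$), while off $E^{\theta_1,\theta_2}$ one absorbs the values of $f_i$ into the constant $C_i$.

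Next, arguing by contradiction, I would suppose there exist $(u,v)\in\partial K_{\sigma_1,\sigma_2}$ and $\lambda\geq1$ with $\lambda(u,v)=\mathcal T(u,v)$, fix $i\in\{1,2\}$, and set $w_1:=u$, $w_2:=v$. Using $\lambda\geq1$, the bound above, and $p(s)\leq\sup_{[0,1]}p$ inside the kernel integral — which is precisely what cancels the $\sup p$ in the denominator — I would get, for $t\in[0,1]$,
\begin{equation*}
w_i(t)\leq\mathcal T_i(u,v)(t)\leq\nu_i\,\mathcal L_i w_i(t)+\eta_i(t),\qquad\eta_i:=C_i\,\mathcal L_i p\in\mathcal K_{\omega_i}\subset C^1_{\omega_i}[0,1]
\end{equation*}
(the membership of $\eta_i$ following from Theorem~\ref{lcomp}). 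Since $\mathcal L_i$ is positive and $w_i\geq0$, applying $\nu_i\mathcal L_i$ to this inequality $n$ times gives $0\leq w_i\leq\nu_i^{\,n}\mathcal L_i^{\,n}w_i+\sum_{k=0}^{n-1}\nu_i^{\,k}\mathcal L_i^{\,k}\eta_i$; taking $\|\cdot\|_\infty$ (and recalling $\|\cdot\|_\infty\leq\|\cdot\|$ together with $\|w\|=\|w\|_\infty$ on $\mathcal K_{\omega_i}$) and letting $n\to+\infty$, the term $\nu_i^{\,n}\|\mathcal L_i^{\,n}\|$ tends to $0$ and the series $\sum_k\nu_i^{\,k}\|\mathcal L_i^{\,k}\|$ converges, because $\bigl(\nu_i^{\,k}\|\mathcal L_i^{\,k}\|\bigr)^{1/k}\to\nu_i\,r(\mathcal L_i)=(\mu(\mathcal L_i)-\varepsilon_i)/\mu(\mathcal L_i)<1$. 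This yields
\begin{equation*}
\|w_i\|_\infty\leq\|\eta_i\|\sum_{k=0}^{+\infty}\nu_i^{\,k}\|\mathcal L_i^{\,k}\|=:\tau_i,
\end{equation*}
a constant depending only on $\varepsilon_i$, $p$ and $k_i$. Choosing $\sigma_i>\tau_i$ then contradicts the description of $\partial K_{\sigma_1,\sigma_2}$ recalled in Section~3 (which forces $\|w_i\|_\infty=\sigma_i$ for some $i$), so $\mathcal T(u,v)\neq\lambda(u,v)$ on $\partial K_{\sigma_1,\sigma_2}$ for every $\lambda\geq1$ and Theorem~\ref{index}(1) applies.

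I expect the first step to be the main obstacle: turning a bound valid only where \emph{all} the arguments of $f_i$ are large into a genuinely global sublinear estimate requires controlling $f_i$ on the non-compact complement of $E^{\theta_1,\theta_2}$; this is automatic when $f_i$ depends only on $(r,w_i)$ but in general calls for a mild growth restriction, and that is the point where I would be most careful. Everything afterwards is routine — it is the iteration already employed in Theorem~\ref{idx1aut1}, except that the clean limit $\nu_i^{\,n}\mathcal L_i^{\,n}w_i\to0$ is replaced by the Neumann-type series estimate above in order to absorb the additive constant $C_i$.
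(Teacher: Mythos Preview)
Your approach is essentially the paper's: obtain a global sublinear bound $f_i\le\frac{\nu_i}{\sup p}\,w_i+C_i$, then show that $\lambda(u,v)=\mathcal T(u,v)$ with $\lambda\ge1$ forces a uniform a~priori bound $\|w_i\|_\infty\le\tau_i$, contradicting $\sigma_i>\tau_i$. The only cosmetic difference is that the paper invokes the bounded positive inverse $(\Id-\nu_i\mathcal L_i)^{-1}$ via the Neumann series in one stroke, whereas you iterate the pointwise inequality and pass to the limit; the resulting $\tau_i$ differ but both work. Your caution about the first step is on target: the complement of $E^{\theta_1,\theta_2}$ is indeed non-compact, and the paper makes exactly the same jump, asserting the global estimate after bounding $f_i$ by continuity only on the compact box $[R_0,R_1]\times[0,\theta_1]\times[0,\theta_2]\times[0,\theta_1]\times[0,\theta_2]$ --- so your proof is no less complete than the original on this point.
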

\begin{proof}

Since the functions $f_i$ are continuous,  there exist  some constants $N_i$ depending on $\theta_i$  such that
\begin{equation*}
f_i(r,w_1,w_2,z_1,z_2)\le \frac{N_{i}}{\displaystyle\sup_{t \in [0,1]}p(t)} \;\text{ for } \; (r,w_1,w_2, z_1,z_2)\in [R_0,R_1] \times[0,\theta_1]\times[0,\theta_2]\times[0,\theta_1]\times[0,\theta_2].
\end{equation*}
 Hence
\begin{equation}\label{supest}
f_i(r,w_1,w_2,z_1,z_2)\le\frac{(\mu(\mathcal L_i)-\varepsilon_i)}{\displaystyle \sup_{t \in [0,1]}p(t)}w_i+\frac{N_{i}}{\displaystyle\sup_{t \in [0,1]}p(t)} 
\end{equation}
for $r\in [R_0,R_1]$ and $w_1, w_2, z_1,z_2\geq 0$ .\\
 Let $\Id$ be the identity operator. Since for $i=1,2$ the
operators $(\mu(\mathcal L_i)-\varepsilon_i) \mathcal L_i$ have spectral radius less
than one, the operators
$(\Id-(\mu(\mathcal L_i)-\varepsilon_i)\mathcal L_i)^{-1}$ exist and are bounded.
Moreover, from the Neumann series expression,
$$
(\Id-(\mu(\mathcal L_i)-\varepsilon_i) \mathcal L_i)^{-1}=\sum_{k=0}^\infty((\mu(\mathcal L_i)-\varepsilon_i) \mathcal L_i)^k
$$
it follows that  $(\Id-(\mu(\mathcal L_i)-\varepsilon_i) \mathcal L_i)^{-1}$ map $\mathcal P$ into $\mathcal P$, since the operators $\mathcal L_i$ have this property.

Take for $i=1,2$
$$
C_i:=N_i\int_{0}^{1}\phi_i(s)ds \,\,\text{ and }\,\,\tau_i:=\|(\Id-(\mu(\mathcal L_i)-\varepsilon_i) L_i)^{-1} C_i \|_{\infty} .
 $$
 Now we prove that, for $\sigma_i>\tau_i$, $\mathcal T(u,v)\ne\lambda (u,v)$ for all $(u,v)\in\partial K_{\sigma_1,\sigma_2}$ and $\lambda\ge 1$, which implies $ i_{\mathcal K}(\mathcal T,K_{\sigma_1,\sigma_2})=1$.
 Otherwise there exist $(u,v)\in\partial K_{\sigma_1,\sigma_2}$ and $\lambda\ge 1$ such that $\lambda (u,v)=\mathcal T(u,v)$. Suppose that $\|u\|=\|u\|_{\infty}=\sigma_1$ and $\|v\|=\|v\|_{\infty}\leq \sigma_2$. \\
 From the  inequality \eqref{supest}, it follows that, for $t\in [0,1]$,
\begin{align*}
u(t)\leq\lambda &u(t)=  \mathcal T_1(u,v)(t)  \le \int_0^1k_1(t,s)p(s)f_1\left(r(s),u(s),v(s),\frac{|u'(s)|}{|r'(s)|},\frac{|v'(s)|}{|r'(s)|}\right)ds\\
\le & \frac{(\mu(\mathcal L_1)-\varepsilon_1)}{\displaystyle \sup_{t \in [0,1]}p(t)}\int_0^1k_1(t,s)p(s)u(s)ds+\frac{N_1}{\displaystyle \sup_{t \in [0,1]}p(t)}\int_0^1k_1(t,s)p(s)ds \\
\le & (\mu(\mathcal L_1)-\varepsilon_1)\int_0^1k_1(t,s)u(s)ds+N_1\int_0^1\phi_1(s)ds=
(\mu(\mathcal L_1)-\varepsilon_1)\mathcal L_1 u(t)+C_1,
\end{align*}
which implies
$$
 (\Id-(\mu(\mathcal L_1)-\varepsilon_1) \mathcal L_1)u(t)\le C_1.
 $$
 Since $(\Id-(\mu(\mathcal L_1)-\varepsilon_1) \mathcal L)^{-1}$ is non-negative, it follows that, for $t\in [0,1]$,
$$
u(t)\le (\Id-(\mu(\mathcal L_1)-\varepsilon_1) \mathcal L_1)^{-1} C_1.
$$
Consequently $\sigma_1=\|u\|_{\infty}\le \tau_1$, a contradiction.\\
The case $\|u\|_{\infty}\le\sigma_1$ and $\|v\|_{\infty}= \sigma_2$ is obtained in a similar way.
\end{proof}
\begin{rem}  In  ~\cite{jw-lms,webb-lan}, under suitable assumptions on $k$, Webb and Lan prove that the operator $L$ defined by
$$Lu(t)=\int_0^1k(t,s)u(s)ds,$$
satisfies  the following inequality $m \leq \mu(L) \le M(a,b)$, where $m$ and $M(a,b)$ are defined similary to \eqref{m}  and \eqref{M1}.\\
Moreover in ~\cite{webb-lan}, in the setting of the space of continuous functions,  the authors obtained that $$\mu(\mathcal L_1)=\pi^2 \,\,\text{ and }\,\,\mu(\mathcal L_2)=\frac{\pi^2}{4};$$
 the same results hold in the space $C^1[0,1]$. 
\end{rem}
\smallskip

\noindent
In the zero index calculation of the operator $\mathcal T$  it is more convenient to use  the linear operators  $\overline{\mathcal L}_i:C^1_{\omega_i}[a_i,b_i]\to C^1_{\omega_i}[a_i,b_i]$ defined by, for $t \in [a_i,b_i]$,
$$\overline{\mathcal L}_iu_i(t)=\int_{a_i}^{b_i}k_i(t,s)u_i(s)ds,$$ 
that have the same properties of  the operators $\mathcal L_i$.
\begin{thm} \label{idx0aut1} Assume that
\begin{enumerate}
\item[] there exist $\varepsilon_i>0$ and $\rho_i>0$ such that the following condition holds for some $i=1,2$:
\end{enumerate}
\begin{equation}\label{eqmu+}
\inf_{B_i^{\rho_1,\rho_2}}
f_i(r,w_1,w_2,z_1,z_2) \geq
\frac{(\mu(\overline{\mathcal L}_i)+\varepsilon_i)}{\displaystyle\inf_{t \in [a_i,b_i]}p(t)}w_i, 
\end{equation}
where
$$B_i^{\rho_1,\rho_2}=[
\min\{r(a_i),r(b_i)\},\max\{r(a_i),r(b_i)\}]\times[0,\rho_1]\times[0,\rho_2]\times[0, +\infty)^2.$$
Then $i_{\mathcal K}(\mathcal T,K_{\sigma_1,\sigma_2})=0$ for $\sigma_i\leq \rho_i,\, i=1,2.$
\end{thm}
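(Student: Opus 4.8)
The plan is to verify the condition in part~(2) of Theorem~\ref{index}, with $\Omega^{1}\cap\mathcal K=K_{\sigma_{1},\sigma_{2}}$ and $F=\mathcal T$: it is enough to exhibit some $h\in\mathcal K\setminus\{0\}$ for which $(u,v)\neq\mathcal T(u,v)+\lambda h$ holds for every $(u,v)\in\partial K_{\sigma_{1},\sigma_{2}}$ and every $\lambda\ge 0$, and then $i_{\mathcal K}(\mathcal T,K_{\sigma_{1},\sigma_{2}})=0$ follows. Without loss of generality I would fix the index $i=1$ for which \eqref{eqmu+} is assumed, the case $i=2$ being identical after swapping the two coordinates and taking $h=(0,1)$ instead. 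Two facts about $\overline{\mathcal L}_{1}$ will be used, both available since $\overline{\mathcal L}_{1}$ shares the properties of $\mathcal L_{1}$: by the analogue of Theorem~\ref{specrad}, $r(\overline{\mathcal L}_{1})>0$ and $\overline{\mathcal L}_{1}$ has an eigenfunction $\varphi_{1}\in\mathcal P_{\omega_{1}}\setminus\{0\}$ with $\overline{\mathcal L}_{1}\varphi_{1}=\mu(\overline{\mathcal L}_{1})^{-1}\varphi_{1}$; and, by the Remark after Theorem~\ref{specrad}, $\overline{\mathcal L}_{1}$ is $u_{0}$-positive relative to $\mathcal P_{\omega_{1}}$.

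The choice I would make is $h:=(1,0)$, where $1$ is the constant function on $[0,1]$; since $c_{1}<1$ one checks at once that $1\in\mathcal K_{\omega_{1}}$ and $0\in\mathcal K_{\omega_{2}}$, so $h\in\mathcal K\setminus\{0\}$. Arguing by contradiction, suppose $(u,v)=\mathcal T(u,v)+\lambda h$ for some $(u,v)\in\partial K_{\sigma_{1},\sigma_{2}}$ and some $\lambda\ge 0$. A fixed point of $\mathcal T$ on $\partial K_{\sigma_{1},\sigma_{2}}$ would already be a positive solution of \eqref{PDE}, so I may assume $\lambda>0$; then the first coordinate gives $u(t)=\mathcal T_{1}(u,v)(t)+\lambda\ge\lambda>0$ for all $t\in[0,1]$, and in particular $u|_{[a_{1},b_{1}]}$ is a \emph{nonzero} element of $\mathcal P_{\omega_{1}}$. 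Moreover, on $\partial K_{\sigma_{1},\sigma_{2}}$ one has $\|u\|_{\infty}\le\sigma_{1}\le\rho_{1}$ and $\|v\|_{\infty}\le\sigma_{2}\le\rho_{2}$, while the monotonicity of $r$ keeps $r(s)$ between $r(a_{1})$ and $r(b_{1})$ for $s\in[a_{1},b_{1}]$; hence the whole argument of $f_{1}$ lies in $B_{1}^{\rho_{1},\rho_{2}}$, so \eqref{eqmu+} applies on $[a_{1},b_{1}]$.

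Starting from $u(t)\ge\mathcal T_{1}(u,v)(t)\ge\int_{a_{1}}^{b_{1}}k_{1}(t,s)\,p(s)\,f_{1}\big(r(s),u(s),v(s),|u'(s)|/|r'(s)|,|v'(s)|/|r'(s)|\big)\,ds$, inserting the lower bound \eqref{eqmu+} and using $p(s)\ge\inf_{\tau\in[a_{1},b_{1}]}p(\tau)$ on $[a_{1},b_{1}]$, I obtain $u(t)\ge(\mu(\overline{\mathcal L}_{1})+\varepsilon_{1})\,\overline{\mathcal L}_{1}u(t)$ for $t\in[a_{1},b_{1}]$, i.e. $\overline{\mathcal L}_{1}\big(u|_{[a_{1},b_{1}]}\big)\le(\mu(\overline{\mathcal L}_{1})+\varepsilon_{1})^{-1}\,u|_{[a_{1},b_{1}]}$. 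Then I would invoke the Comparison Theorem (Theorem~\ref{comparison}) with $L=S=\overline{\mathcal L}_{1}$, taking $u_{1}=\varphi_{1}$ and $\lambda_{1}=\mu(\overline{\mathcal L}_{1})^{-1}$ (so that $\overline{\mathcal L}_{1}u_{1}\ge\lambda_{1}u_{1}$), and $u_{2}=u|_{[a_{1},b_{1}]}$ and $\lambda_{2}=(\mu(\overline{\mathcal L}_{1})+\varepsilon_{1})^{-1}$ (so that $\overline{\mathcal L}_{1}u_{2}\le\lambda_{2}u_{2}$): the conclusion $\lambda_{1}\le\lambda_{2}$ reads $\mu(\overline{\mathcal L}_{1})+\varepsilon_{1}\le\mu(\overline{\mathcal L}_{1})$, contradicting $\varepsilon_{1}>0$. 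Hence no such pair $(u,v)$ and no such $\lambda\ge 0$ can exist, and part~(2) of Theorem~\ref{index} yields $i_{\mathcal K}(\mathcal T,K_{\sigma_{1},\sigma_{2}})=0$ for all $\sigma_{i}\le\rho_{i}$, $i=1,2$.

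I expect the only genuinely delicate point to be the bookkeeping between $\mathcal T$, which acts on functions on $[0,1]$, and $\overline{\mathcal L}_{1}$, which acts on functions on $[a_{1},b_{1}]$: one has to be sure that the restriction $u|_{[a_{1},b_{1}]}$ of a boundary element is a genuinely nonzero element of the cone on which $\overline{\mathcal L}_{1}$ is $u_{0}$-positive, and that the inequality obtained there is precisely of the form demanded by Theorem~\ref{comparison}. Choosing $h=(1,0)$ rather than an eigenfunction of $\overline{\mathcal L}_{1}$ is what makes $u|_{[a_{1},b_{1}]}\not\equiv 0$ immediate (from $u=\mathcal T_{1}(u,v)+\lambda\ge\lambda>0$) and avoids any iterated ``largest $\lambda$ with $u\ge\lambda h$'' argument.
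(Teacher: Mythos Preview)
Your overall strategy is sound and close in spirit to the paper's, but the way you discard the case $\lambda=0$ is a genuine gap. The statement you are proving is an index computation, not an existence result: to conclude $i_{\mathcal K}(\mathcal T,K_{\sigma_1,\sigma_2})=0$ via part~(2) of Theorem~\ref{index} you must exhibit an $h$ for which $(u,v)\neq\mathcal T(u,v)+\lambda h$ for \emph{every} $\lambda\ge 0$, including $\lambda=0$. Saying ``a fixed point on the boundary would already be a positive solution'' does not establish that the index equals $0$; indeed, if such a boundary fixed point existed the index would not even be defined. The paper does treat $\lambda=0$ explicitly, and it does so precisely with the Comparison Theorem argument you give for $\lambda>0$: from $(u,v)=\mathcal T(u,v)$ one still gets $u(t)\ge(\mu(\overline{\mathcal L}_1)+\varepsilon_1)\,\overline{\mathcal L}_1u(t)$ on $[a_1,b_1]$ and Theorem~\ref{comparison} yields the same contradiction. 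So your own machinery already closes the gap once you stop dismissing $\lambda=0$; you only need to check separately that $u|_{[a_1,b_1]}\not\equiv 0$ in that case (the paper is tacit on this point as well).

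Where your route genuinely differs from the paper's is in two choices. First, the paper takes $h=(\varphi_1,\varphi_2)$ with $\varphi_i$ the Krein--Rutman eigenfunctions of $\overline{\mathcal L}_i$, while you take $h=(1,0)$; your choice is simpler and makes $u\ge\lambda>0$ immediate without appealing to positivity of an eigenfunction on $[a_1,b_1]$. Second, for $\lambda>0$ the paper runs the classical iteration $u(t)>n\lambda\varphi_1(t)$ (using $\overline{\mathcal L}_1\varphi_1=r(\overline{\mathcal L}_1)\varphi_1$) to force a blow-up contradicting $\|u\|\le\sigma_1$, reserving the Comparison Theorem only for $\lambda=0$; you instead apply Theorem~\ref{comparison} directly in the $\lambda>0$ case. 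Both arguments are valid; the iteration avoids invoking $u_0$-positivity, while your use of Theorem~\ref{comparison} unifies the two cases into a single step once the $\lambda=0$ case is reinstated.
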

\begin{proof}
 Let $\sigma_i\leq 
\rho_{i}$ and  let $\varphi_i\in \mathcal K_{\omega_i}$ be the eigenfunction of $\overline{\mathcal L}_i$ with $\|\varphi_i\|=\|\varphi_i\|_{\infty}=1$ corresponding to the eigenvalue $1/\mu(\overline{\mathcal L}_i)$. Now we show that $(u,v)\ne
\mathcal  T(u,v)+\lambda(\varphi_1,\varphi_2)$ for all $(u,v)$ in $\partial
K_{\sigma_1,\sigma_2}$ and $\lambda\geq 0$ which implies that
$ i_{\mathcal K}(\mathcal T,K_{\sigma_1,\sigma_2})=0$.\\
Assume, on the contrary, that there exist $(u,v)\in\partial
K_{\sigma_1,\sigma_2}$ and $\lambda\geq0$ such that
$(u,v)=\mathcal T(u,v)+\lambda(\varphi_1,\varphi_2)$.\\ Two
cases are distincts. Firstly we discuss the case $\lambda>0$. Suppose that
\eqref{eqmu+} holds for $i=1$. This implies that, for $t\in [a_1,b_1]$,
\begin{align*}
  u(t)\geq & \int_{a_{1}}^{b_{1}} k_{1}(t,s)p(s)f_1\left(r(s),u(s),v(s),\frac{|u'(s)|}{|r'(s)|},\frac{|v'(s)|}{|r'(s)|}\right)ds+\lambda \varphi_1(t) \\
  >&\frac{\mu( \overline{\mathcal L}_1) }{\displaystyle \inf_{t \in [a_1,b_1]}p(t)}\int_{a_{1}}^{b_{1}} k_1(t,s)p(s)u(s)ds+\lambda \varphi_1(t) \\
  &\geq \mu( \overline{\mathcal L}_1) \int_{a_{1}}^{b_{1}} k_1(t,s)u(s)ds+\lambda \varphi_1(t) = \mu(\overline{\mathcal L}_1)  \overline{\mathcal L}_1u(t)+\lambda\varphi_1(t).
 \end{align*}
Moreover  $u(t)>\lambda\varphi_1(t)$ for $t\in [a_1,b_1]$; then $
\overline{\mathcal L}_1u(t)\ge\lambda \overline{\mathcal L}_1\varphi_1(t)= \dfrac{\lambda}{\mu(
\overline{\mathcal L}_1)}\varphi_1(t)$ and we obtain
$$
u(t)>\mu( \overline{\mathcal L}_1)
\overline{\mathcal L}_1u(t)+\lambda\varphi_1(t)\ge2\lambda\varphi_1(t).
$$
By iteration, it follows that, for $ t\in[a_1,b_1]$, 
$$
u(t)> n\lambda\varphi_1(t)  \text{   for every  } n\in\mathbb {N},
$$
 a contradiction because $\|u\|\leq\sigma_1$.\\
 Now we consider the case $\lambda=0$.  We have, for $t\in [a_1,b_1]$,
\begin{align*}
 u(t)
  \geq& \int_{a_{1}}^{b_{1}} k_{1}(t,s)p(s)f_1\left(r(s),u(s),v(s),\frac{|u'(s)|}{|r'(s)|},\frac{|v'(s)|}{|r'(s)|}\right)ds \geq(\mu(  \overline{\mathcal L}_1)+\varepsilon_1)
  \overline{\mathcal L}_1u(t)
\end{align*}
and, consequently, we obtain
$$\overline{\mathcal L}_1u(t)\leq \frac{1}{\mu(  \overline{\mathcal L}_1)+\varepsilon_1}u(t).
$$
Then, by Comparison Theorem~\ref{comparison}, it follows that $r({\overline{\mathcal L}}_1)\leq
\dfrac{1}{\mu( \overline{\mathcal L}_1)+\varepsilon_1}$ and thus we get
$$\mu(
\overline{\mathcal L}_1)+\varepsilon_1\leq\frac{1}{r(\overline{\mathcal L}_1)}= \mu(\overline{\mathcal L}_1),$$ a contradiction. \par
 \end{proof}
 \begin{rem}
 As in  ~\cite{webb-lan}, we obtain by direct calculations that
 $$
 \mu(\overline{\mathcal L_1})=\frac{\pi^2}{(b_1-a_1)^2} \,\,\text{ and }\,\,\mu(\overline{\mathcal L_2})=\frac{\pi^2}{4(b_2-a_2)^2} .
 $$
 \end{rem}
 \bigskip

 \noindent
The difference between Theorem \ref{idx0aut1} and the following Theorem  consists in the fact that in Theorem \ref{idx0aut3}  the lower bound of the $f_i$ is calculate for $w_i$ enough  far from the zero.\\
\bigskip

\noindent
  \begin{thm} \label{idx0aut3}
Assume that
\begin{enumerate}
\item[]  for $i=1,2$ there exist $\varepsilon_i>0$ and $\theta_i>0$ such that  the following condition holds:
\end{enumerate}
\begin{equation}\label{eqmu+in}
\inf_{D_i ^{\theta_1,\theta_2}}
f_i(r,w_1,w_2,z_1,z_2) \geq
\frac{(\mu(\overline{\mathcal L}_i)+\varepsilon_i)}{\displaystyle\inf_{t \in [a_i,b_i]}p(t)}w_i,\end{equation}
where
$$D_1^{\theta_1,\theta_2}=[
\min\{r(a_1),r(b_1)\},\max\{r(a_1),r(b_1)\}]\times[c_1\theta_1,+\infty)\times[0,+\infty)^3,$$
$$
D_2^{\theta_1,\theta_2}=[
\min\{r(a_2),r(b_2)\},\max\{r(a_2),r(b_2)\}]\times[0,+\infty)\times[c_2\theta_2,+\infty)\times[0,+\infty)^2.$$
Then  $i_{\mathcal K}(\mathcal T,K_{s_1,s_2})=0$ for $s_i\geq \theta_i$, $i=1,2$.
\end{thm}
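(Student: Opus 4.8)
The plan is to use part~(2) of Theorem~\ref{index}, with distinguished element $h=(\varphi_1,\varphi_2)\in\mathcal K\setminus\{0\}$, where for $i=1,2$ the function $\varphi_i\in\mathcal K_{\omega_i}$ is the eigenfunction of $\overline{\mathcal L}_i$ (which shares the properties of $\mathcal L_i$, in particular those in Theorem~\ref{specrad}), normalized so that $\|\varphi_i\|=\|\varphi_i\|_\infty=1$ and associated to the eigenvalue $1/\mu(\overline{\mathcal L}_i)=r(\overline{\mathcal L}_i)$. Fixing $s_i\geq\theta_i$ for $i=1,2$, I would show that $(u,v)\neq\mathcal T(u,v)+\lambda(\varphi_1,\varphi_2)$ for all $(u,v)\in\partial K_{s_1,s_2}$ and all $\lambda\geq0$; by Theorem~\ref{index} this yields $i_{\mathcal K}(\mathcal T,K_{s_1,s_2})=0$. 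Arguing by contradiction, suppose such $(u,v)$ and $\lambda\geq0$ exist.

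The first step is to activate the hypothesis \eqref{eqmu+in}. Since $(u,v)\in\partial K_{s_1,s_2}$, there is an index $i$ with $\|w_i\|_\infty=s_i$; assume $\|u\|_\infty=s_1\geq\theta_1$, the case $\|v\|_\infty=s_2\geq\theta_2$ being treated identically through the hypothesis for $i=2$. Because $u\in\mathcal K_{\omega_1}$, one has $u(t)\geq c_1\|u\|_\infty=c_1 s_1\geq c_1\theta_1$ for $t\in[a_1,b_1]$, so that for $s\in[a_1,b_1]$ the state
$$\left(r(s),u(s),v(s),\frac{|u'(s)|}{|r'(s)|},\frac{|v'(s)|}{|r'(s)|}\right)$$
lies in $D_1^{\theta_1,\theta_2}$ and \eqref{eqmu+in} may be applied. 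Using in addition $p(s)\geq\inf_{t\in[a_1,b_1]}p(t)$ on $[a_1,b_1]$, the positivity of $k_1$, and restricting the integral defining $\mathcal T_1(u,v)$ to $[a_1,b_1]$, I obtain, for $t\in[a_1,b_1]$,
$$u(t)\geq(\mu(\overline{\mathcal L}_1)+\varepsilon_1)\,\overline{\mathcal L}_1u(t)+\lambda\varphi_1(t).$$

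From here the reasoning is exactly that of Theorem~\ref{idx0aut1}. If $\lambda>0$, then $u(t)>\lambda\varphi_1(t)$ on $[a_1,b_1]$; by monotonicity of $\overline{\mathcal L}_1$ and the identity $\overline{\mathcal L}_1\varphi_1=\varphi_1/\mu(\overline{\mathcal L}_1)$ one gets $\overline{\mathcal L}_1u(t)\geq(\lambda/\mu(\overline{\mathcal L}_1))\varphi_1(t)$, hence $u(t)>2\lambda\varphi_1(t)$, and iterating $u(t)>n\lambda\varphi_1(t)$ for every $n\in\mathbb N$ and $t\in[a_1,b_1]$; since $\varphi_1(t)\geq c_1>0$ there, this contradicts $\|u\|<+\infty$. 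If $\lambda=0$, then $\overline{\mathcal L}_1u(t)\leq\frac{1}{\mu(\overline{\mathcal L}_1)+\varepsilon_1}u(t)$ on $[a_1,b_1]$, and the Comparison Theorem~\ref{comparison} (with $L=S=\overline{\mathcal L}_1$, which is $u_0$-positive, $u_1=\varphi_1$, $\lambda_1=r(\overline{\mathcal L}_1)$, $u_2=u$, $\lambda_2=\frac{1}{\mu(\overline{\mathcal L}_1)+\varepsilon_1}$) forces $r(\overline{\mathcal L}_1)\leq\frac{1}{\mu(\overline{\mathcal L}_1)+\varepsilon_1}$, i.e. $\mu(\overline{\mathcal L}_1)+\varepsilon_1\leq\mu(\overline{\mathcal L}_1)$, a contradiction.

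The only point that is genuinely new with respect to Theorem~\ref{idx0aut1}, and the one I would be most careful about, is the verification that along $[a_i,b_i]$ the state vector belongs to $D_i^{\theta_1,\theta_2}$: this relies on the structural inequality $w_i\geq c_i\|w_i\|_\infty$ on $[a_i,b_i]$ built into $\mathcal K_{\omega_i}$, together with $\|w_i\|_\infty=s_i\geq\theta_i$ on $\partial K_{s_1,s_2}$ for the appropriate index $i$ --- which is precisely why \eqref{eqmu+in} is assumed for both $i=1$ and $i=2$. Once this is in place, the remainder is a transcription of the earlier proof, including the use of the $u_0$-positivity of $\overline{\mathcal L}_i$ recorded in the Remark following Theorem~\ref{specrad}.
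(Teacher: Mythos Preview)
Your proposal is correct and follows essentially the same approach as the paper: you use the eigenfunctions $(\varphi_1,\varphi_2)$ as the element $h$ in Theorem~\ref{index}(2), verify via the cone inequality $u(t)\ge c_1\|u\|_\infty\ge c_1\theta_1$ on $[a_1,b_1]$ that the state lies in $D_1^{\theta_1,\theta_2}$, and then reduce both the $\lambda>0$ and $\lambda=0$ cases to the arguments of Theorem~\ref{idx0aut1}. Your write-up is in fact slightly more explicit than the paper's (e.g.\ in spelling out the application of the Comparison Theorem and the reason $\varphi_1>0$ on $[a_1,b_1]$), but the logic is the same.
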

\begin{proof}
 Let $s_i\geq \theta_i$. We prove that
$(u,v)\ne \mathcal T(u,v)+\lambda(\varphi_1,\varphi_2)$ for all $(u,v)$ in
$\partial K_{s_1,s_2}$ and $\lambda\geq 0$,
where $\varphi_i\in \mathcal  K_{\omega_i}$ is the eigenfunction associated to $r(\overline{\mathcal L}_i)$ as in Theorem \ref{idx0aut1}, which implies that
$ i_{\mathcal K}(\mathcal T ,K_{s_1,s_2})=0$.\\
Assume, on the contrary, that there exist $(u,v)\in\partial
K_{s_1,s_2}$ and $\lambda\geq0$ such that
$(u,v)=\mathcal T(u,v)+\lambda(\varphi_1,\varphi_2)$. Suppose that
$\|u\|=s_1$ and $\|v\|\leq s_2$.\\
Then, for $t\in[a_1,b_1]$, $u(t)\ge c_1\|u\|=c_1 s_1\geq c_1 \theta_1$,  thus condition \eqref{eqmu+in} holds. Hence we obtain, for $t\in[a_1,b_1]$,
$$p(t)f_1\left(r(t),u(t),v(t),\frac{|u'(t)|}{|r'(t)|},\frac{|v'(t)|}{|r'(t)|}\right)\geq(\mu( \overline{\mathcal L}_1)+\varepsilon_1)u(t).$$   Proceeding as in the proof of Theorem \ref{idx0aut1} in the the case $\lambda>0$, this implies that, for $t\in [a_1,b_1]$,
$$
u(t)>\mu(\overline{\mathcal L}_1)
\overline{\mathcal L}_1u(t)+\lambda\varphi_1(t)\ge2\lambda\varphi_1(t).
$$
Then $u(t)> n\lambda\varphi_1(t)$ for every $n\in\mathbb {N}$, a contradiction because $\|u\|=s_1$.\\
The proof in the case $\|u\|\leq s_1$ and $\|v\|=s_2$  is analogous and the case $\lambda=0$ is treated as in Theorem \ref{idx0aut1}.
 \end{proof}
\noindent
Using Theorem \ref{idx1aut1} and Theorem \ref{idx0aut3},  the following existence result of positive radial solution for the system \eqref{PDE} holds.
\\
\begin{thm} \label{existence}Assume
that
\begin{enumerate}
\item[]  for $i=1,2$, there exist $\varepsilon_i, \eta_i>0$, $\rho_i, \theta_i>0$, with $\rho_{i}\leq c_i \theta_i$, such that  the following conditions hold:
\end{enumerate}
\begin{equation*}\label{eqmu+2}
\sup_{\Omega^{\rho_1,\rho_2}}
f_i(r,w_1,w_2,z_1,z_2) \leq
\frac{(\mu(\mathcal L_i)-\varepsilon_i)}{\displaystyle\sup_{t \in [0,1]}p(t)}w_i,
\end{equation*}
and
\begin{equation*}\label{eq1mu+}
\inf_{D_i^{\theta_1,\theta_2}}
f_i(r,w_1,w_2,z_1,z_2) \geq
\frac{(\mu(\overline{\mathcal L}_i)+\eta_i)}{\displaystyle\inf_{t \in [a_i,b_i]}p(t)}w_i,
\end{equation*}
where
$\Omega^{\rho_1,\rho_2}$ and $D_i^{\theta_1,\theta_2}$ are as in Theorem \ref{idx1aut1} and Theorem \ref{idx0aut3}.\\
Then the system (\ref{PDE}) has at least one positive radial
solution.
\end{thm}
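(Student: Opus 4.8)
The plan is to combine the two index computations already established. First I would fix $i \in \{1,2\}$ and note that the two hypotheses are precisely the hypotheses of Theorem~\ref{idx1aut1} (with parameters $\varepsilon_i, \rho_i$) and of Theorem~\ref{idx0aut3} (with parameters $\eta_i, \theta_i$). Applying Theorem~\ref{idx1aut1} gives $i_{\mathcal K}(\mathcal T, K_{\sigma_1,\sigma_2}) = 1$ whenever $\sigma_i \le \rho_i$ for $i=1,2$; in particular this holds for the choice $\sigma_i = \rho_i$. Applying Theorem~\ref{idx0aut3} gives $i_{\mathcal K}(\mathcal T, K_{s_1,s_2}) = 0$ whenever $s_i \ge \theta_i$ for $i=1,2$; in particular this holds for $s_i = \theta_i$.

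Next I would check the inclusion needed to use the additivity/excision property of the fixed point index. Because $\rho_i \le c_i \theta_i < \theta_i$ (recall $0 < c_i < 1$), we have $\overline{K_{\rho_1,\rho_2}} \subset K_{\theta_1,\theta_2}$, so the set $U := (K_{\theta_1,\theta_2}\cap\mathcal K) \setminus \overline{K_{\rho_1,\rho_2}\cap\mathcal K}$ is a nonempty open (relative to $\mathcal K$) bounded set on which $\mathcal T$ is a compact map into $\mathcal K$. The standard cone-index machinery — equivalently, the second conclusion of Theorem~\ref{index} read with $\Omega\cap\mathcal K = K_{\theta_1,\theta_2}\cap\mathcal K$ and $\Omega^1\cap\mathcal K = K_{\rho_1,\rho_2}\cap\mathcal K$, where the index is $0$ on the big set and $1$ on the small set — yields
\[
i_{\mathcal K}(\mathcal T, U) = i_{\mathcal K}(\mathcal T, K_{\theta_1,\theta_2}) - i_{\mathcal K}(\mathcal T, K_{\rho_1,\rho_2}) = 0 - 1 = -1 \ne 0,
\]
so $\mathcal T$ has a fixed point $(u,v) \in U \subset \mathcal K$.

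Finally I would translate the fixed point back into a solution of the PDE. Since $(u,v)$ lies in $\mathcal K$ and solves the auxiliary Hammerstein system \eqref{operT}, it is a radial solution of \eqref{1syst}, hence of \eqref{PDE} in the sense fixed in Section~3. Moreover $(u,v) \notin \overline{K_{\rho_1,\rho_2}}$ forces $\|u\| > \rho_1$ or $\|v\| > \rho_2$, so $\|(u,v)\| > 0$ and the solution is positive in the required sense. The one point that needs a little care — and the only genuine obstacle — is verifying the structural hypotheses of Theorem~\ref{index} (that $0 \in K_{\theta_1,\theta_2}\cap\mathcal K$, that $\overline{K_{\theta_1,\theta_2}\cap\mathcal K} \ne \mathcal K$, and that $\overline{K_{\rho_1,\rho_2}\cap\mathcal K} \subset K_{\theta_1,\theta_2}\cap\mathcal K$); the first two are immediate from boundedness of these sets and $0 \in \mathcal K$, and the inclusion is exactly where the assumption $\rho_i \le c_i\theta_i$ is used. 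Everything else is a direct citation of the theorems proved above.
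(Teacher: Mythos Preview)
Your proposal is correct and follows exactly the route the paper indicates: the paper itself gives no detailed proof of Theorem~\ref{existence}, only the sentence ``Using Theorem~\ref{idx1aut1} and Theorem~\ref{idx0aut3}, the following existence result \dots\ holds,'' and you have simply (and correctly) filled in the standard index-additivity argument via Theorem~\ref{index}. One cosmetic remark: the sets $K_{\rho_1,\rho_2}$ and $K_{\theta_1,\theta_2}$ are already defined as subsets of $\mathcal K$, so writing $K_{\theta_1,\theta_2}\cap\mathcal K$ is redundant; also, for the bare inclusion $\overline{K_{\rho_1,\rho_2}}\subset K_{\theta_1,\theta_2}$ only $\rho_i<\theta_i$ is needed, which of course follows from $\rho_i\le c_i\theta_i$ and $c_i<1$.
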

\smallskip

\noindent
The index results in this Section  can be carefully combined  in order to establish results on existence of multiple positive solutions for the system \eqref{PDE}. We refer to ~\cite{lan-lin-na} for similar statements.\\

\begin{ex}
Theorem ~\ref{existence} can be applied when the nonlinearities $f_i$ are of the type
\begin{equation*}
f_i(|x|,u,v,|\nabla u|, |\nabla v|)=(\delta_i u^{\alpha_i}+\gamma_i v^{\beta_i})h_i(|x|,u,v,|\nabla u|, |\nabla v|)
\end{equation*}
with $h_i$ continuous functions bounded  by a strictly positive constant, $\alpha_i,\beta_i>1$ and  $\delta_i,\gamma_i$ suitable positive constants.\\
For example, one can consider the following system
\begin{gather}\label{ellbvpex}
\begin{cases}
&-\Delta u =\left(1+\frac{2}{\pi}\arctan(|x|^2+|\nabla v|^2)\right)\,u^2 \text{ in } \Omega, \\
&-\Delta v=\frac{4}{\pi}\arctan\left(1+|\nabla u|^2+|\nabla v|^2\right)\,v^2\text{ in } \Omega,\\
&u=0 \text{ on }\partial \Omega,\\
&v=0 \text{ on }|x|=1 \text{ and }\displaystyle\frac{\partial
v}{\partial r}=0 \text{ on }|x|=e,
\end{cases}
\end{gather}
where $\Omega=\{ x\in\mathbb{R}^3 : 1<|x|<e\}$.\\
By direct computation, we obtain $\displaystyle\sup_{t \in [0,1]}p(t)=e^2(e-1)^2$ and, fixed $[a_1,b_1]=\left[\frac{1}{4},\frac{3}{4}\right],\,[a_2,b_2]=\left[\frac{1}{2},1\right]$, we have $c_1=\frac{1}{4},\,c_2=\frac{1}{2}$,
\begin{align*}
&\inf_{t \in \left[\frac{1}{4},\frac{3}{4}\right]}p(t)=\frac{e^2(e-1)^2}{\left(e-\frac{e-1}{4}\right)^4},\,\,\,\,\,\,\inf_{t \in \left[\frac{1}{2},1\right]}p(t)=\frac{e^2(e-1)^2}{\left(e-\frac{e-1}{2}\right)^4}.
\end{align*}
With the choice of $\rho_1=1/10,\,\,\rho_2=1/25\,, \theta_1=200,\,\theta_2=50$,
we obtain
\begin{align*}
&\sup_{\Omega^{\rho_1,\rho_2}}\, f_1= 2\rho_1^2=  0.02<0.045=\frac{\pi^2}{\displaystyle\sup_{t \in[0,1]}p(t)}\,\rho_1;\\
&\sup _{\Omega^{\rho_1,\rho_2}} f_2= 2\rho_2^2=0.0032<0.0045=\frac{\pi^2}{\displaystyle4\sup_{t \in
[0,1]}p(t)}\,\rho_2,\\
&\inf_{D_1^{\theta_1,\theta_2}}  f_1= \frac{1}{16}\theta_1^2=2500>2482.65=\frac{4\pi^2}{\displaystyle\inf_{t \in
\left[\frac{1}{4},\frac{3}{4}\right]}p(t)}\,\frac{\theta_1}{4};\\
&\inf_{D_2^{\theta_1,\theta_2}}\, f_2= \frac{1}{4}\theta_2^2=625>540.47=\frac{\pi^2}{\displaystyle\inf_{t \in
\left[\frac{1}{2},1\right]}p(t)}\,\frac{\theta_2}{2};
\end{align*}
consequently the nonlinearities $f_i$ satisfy Theorem \ref{existence} and the system \eqref{ellbvpex} admits at least one positive radial solution.\\
We conclude by noting that, with this choice of radius, $f_1$ does not satisfy the hypotheses of Theorem \ref{ellyptic} because
$$\sup_{\Omega^{\rho_1,\rho_2}}\, f_1=2\rho_1^2=  0.02>0.014=\frac{m_1}{\displaystyle\sup_{t \in[0,1]}p(t)}\,\rho_1.$$
\end{ex}

\bigskip


\begin{thebibliography}{00}
\bibitem{aga-ore-yan} 
R. P. Agarwal, D. O'Regan and B. Yan, Multiple positive solutions of singular Dirichlet second order boundary-value problems with derivative dependence, \textit{J. Dyn. Control Syst.}, \textbf{15} (2009), 1--26.



\bibitem{Amann-rev} H. Amann,
Fixed point equations and nonlinear eigenvalue
problems in ordered Banach spaces, \textit{SIAM. Rev.}, \textbf{18} (1976),
620--709.

\bibitem{ave-graef-liu} R. I. Avery, J. R. Graef and X. Liu,
 Compression fixed point theorems of operator type,
 \textit{J. Fixed Point Theory Appl.}, \textbf{17}  (2015), 83--97.


\bibitem{ave-mo-tor} D. Averna, D. Motreanu and E. Tornatore,
Existence and asymptotic properties for quasilinear elliptic equations with gradient dependence,
 \textit{Appl. Math. Lett.},  \textbf{61} (2016), 102--107.

\bibitem{bue-er-zu-fe} H. Bueno, G. Ercole, A. Zumpano and W. M. Ferreira,
 Positive solutions for the $p-$Laplacian with dependence on the gradient,
\textit{Nonlinearity}, \textbf{25} (2012), 1211--1234.


\bibitem{cia-pie}
F. Cianciaruso and P. Pietramala,  Semilinear elliptic systems with dependence on the gradient, \textit{Mediterr. J. Math.}, \textbf{15}  (2018),  Art. 152, 13 pp.

\bibitem{defig-sa-ubi} D. G.  De Figueiredo, J. S{\'a}nchez and P. Ubilla,
Quasilinear equations with dependence on the gradient,
 \textit{Nonlinear Anal.}, \textbf{71} (2009), 4862--4868.
 
\bibitem{defig-ubi} D. G. De Figueiredo and P. Ubilla,
Superlinear systems of second-order ODE's,
  \textit{Nonlinear Anal.}, \textbf{68}, no. 6 (2008), 1765--1773.
  
  \bibitem{dolo3} J. M. do {\'O}, J. S{\'a}nchez, S. Lorca and P. Ubilla,
Positive solutions for a class of multiparameter ordinary elliptic systems,
\textit{J. Math. Anal. Appl.}, \textbf{332} (2007),  1249--1266.

  \bibitem{erbe}L. Erbe,
Eigenvalue criteria for existence of positive solutions to nonlinear boundary value problems,
  \textit{Math. Comput. Modelling}, \textbf{32}, no. 5-6 (2000),  529--539.
  
\bibitem{fa-mi-pe} L. F. O. Faria, O. H.  Miyagaki and F. R. Pereira,
 Quasilinear elliptic system in exterior domains with dependence on the gradient,
   \textit{Math. Nachr.}, \textbf{287} (2014), 61--373.
   
  



\bibitem{nirenberg} B. Gidas, W.M. Ni and L. Nirenberg,
 Symmetry and related properties via the maximum principle,
\textit{Comm. Math. Phys.}, {\bf 68} (1979), 209--243.

\bibitem{gra-kon-min} J. Graef, L. Kong and F. Minh\'{o}s, Generalized Hammerstein equations and applications, \textit{Results Math.}, {\bf 72} (2017), 369--383.

 \bibitem{guo-ge}  Y. Guo and W. Ge,
 Positive solutions for three-point boundary value problems with dependence on the first order derivative,
 \textit{J. Math. Anal. Appl.}, \textbf{290} (2004), 291--301.
 
\bibitem{guolak} D. Guo and V. Lakshmikantham,
\textit{Nonlinear Problems in Abstract Cones}, Academic Press, Boston, 1988.

\bibitem{inf-min} G. Infante and F. Minh\'{o}s, Nontrivial solutions of systems of Hammerstein integral equations with first derivative dependence, \textit{Mediterr. J. Math.}, {\bf 14} (2017), Art. 242, 18 pp.



   
\bibitem{inf-pie} G. Infante and P. Pietramala,
 Nonzero radial solutions for a class of elliptic systems with nonlocal BCs on annular domains,
 \textit{NoDEA Nonlinear
Differential Equations Appl.}, \textbf{22} (2015), 979--1003.

\bibitem{janko} T. Jankowski,
Nonnegative solutions to nonlocal boundary value problems for systems of second-order differential equations dependent on the first-order derivatives,
\textit{Nonlinear Anal.}, \textbf{87} (2013), 83--"101.

\bibitem{kee-tra}
M.S. Keener and C.C. Travis, Positive cones and focal points for a class of nth order differential equations, \textit{Trans. Amer. Math. Soc.},  \textbf{237} (1978), 331--351.

\bibitem{kra}
M.A. Krasnosel'ski\u\i{}, Positive Solutions of Operator Equations, P. Noordhoff Ltd. Groningen, 1964.

\bibitem{kra1}
Krasnosel'ski\u\i{}, Topological Methods in the Theory of Nonlinear Integral Equations, The Macmillan Co., New York, 1964.


\bibitem{kre} M. G. Krein and M. A. Rutman,
Linear operators leaving invariant a cone in a Banach space,
\textit{Uspekhi Mat. Nauk.}, \textbf{23} (1948), 3--95(in Russian);  \textit{Amer. Math. Soc. Transl.} \textbf{26} (in English).


\bibitem{lan}
K. Q. Lan,  Eigenvalues of semi-positone Hammerstein integral equations and applications to boundary value problems,  \textit{Nonlinear Anal.}  \textbf{71} no. 12 (2009),  5979--5993.

\bibitem{lan1}
K. Q. Lan, 
Nonzero positive solutions of systems of elliptic boundary value problems, \textit{Proc. Amer. Math. Soc.} \textbf{139} (12) (2011), 4343--4349.


\bibitem{lan-lin}
K. Q. Lan and W. Lin,
Multiple positive solutions of systems of Hammerstein integral equations with applications to fractional differential equations,
\textit{J. Lond. Math. Soc. } \textbf{83} (2) (2011), 449--469.

\bibitem{lan-lin-na} K. Q. Lan and W. Lin,
Positive solutions of systems of singular Hammerstein integral equations with applications to semilinear elliptic equations in annuli,
\textit{Nonlinear Anal.}, \textbf{74} (2011), 7184--7197.


\bibitem{li}
Y. Li,  Abstract existence theorems of positive solutions for nonlinear boundary value problems,  \textit{Nonlinear Anal.},  \textbf{57} no. 2(2004), 211--227.

\bibitem{liu-li} 
Z. Liu and F. Li, Multiple positive solutions of nonlinear two-point boundary value problems, \textit{J. Math. Anal. Appl.},  \textbf{203}, no. 3 (1996), 610--625.

\bibitem {min-desou} F. Minh\'{o}s and R. de Sousa, Existence of solution for functional coupled systems with full nonlinear terms and applications to a coupled mass-spring model, \textit{Differ. Equ. Appl.}, {\bf 9} (2017), 433--452.
   	

\bibitem{min-sou} F. Minh\'{o}s and R.  de Sousa, On the solvability of third-order three point systems of differential equations with dependence on the first derivative, \textit{Bull. Braz. Math. Soc. (N.S.)}, {\bf 48} (2017), 485--503.
 		


\bibitem{singh} G. Singh,
 Classification of radial solutions for semilinear elliptic systems with nonlinear gradient terms,
 \textit{Nonlinear Anal.}, \textbf{129} (2015), 77--103.

\bibitem{jw-lms} J. R. L. Webb,
Solutions of nonlinear equations in cones and positive linear operators,
\textit{J. Lond. Math. Soc.}, \textbf{82} (2010), 420--436.


\bibitem{jw-gi-jlms} J.R.L. Webb and G. Infante,
Positive solutions of nonlocal boundary value problems: a unified approach,
\textit{J. London Math. Soc.}, \textbf{74} (2006), 673--693.

\bibitem{jwgi-lms-II}
J. R. L. Webb and G. Infante,
Nonlocal boundary value problems of arbitrary order, \textit{J. London Math. Soc.}, \textbf{79} (2009), 238--258.

\bibitem{webb-lan} J. R. L. Webb and K. Q. Lan,
Eigenvalue criteria for existence of multiple positive solutions of nonlinear boundary value problems of local and nonlocal type,
\textit{Topol. Methods Nonlinear Anal.}, \textbf{27} (2006), 91--115.



\bibitem{yang-kong} Z. Yang and L. Kong,
 Positive solutions of a system of second order boundary value problems involving first order derivatives via $\mathbb R^n_+$-monotone matrices, \textit{Nonlinear Anal.}, \textbf{75} (2012), 2037--2046.

\bibitem{zha-sun}
G. Zhang and J. Sun, Positive solutions of m-point boundary value problems, \textit{J. Math. Anal. Appl.}, \textbf{291} no. 2 (2004),  406--418. 

\bibitem{zima} M. Zima,
 Positive solutions of second-order non-local boundary value problem with singularities in space variables,
\textit{Bound. Value Probl.}, \textbf{2014} (2014): 200, 9 pp.
\end{thebibliography}
\end{document}